\theoremstyle{plain}
\newtheorem{theorem}{Theorem}[section]
\newtheorem{lemma}[theorem]{Lemma}
\newtheorem*{claim*}{Claim}
\newtheorem{corollary}[theorem]{Corollary}
\newtheorem{proposition}[theorem]{Proposition}
\theoremstyle{definition}
\newtheorem{definition}[theorem]{Definition}
\theoremstyle{remark}
\numberwithin{equation}{section}
\title{Measures and Their Random Reals}
\author{Jan Reimann}
\address{Department of Mathematics  \\
  Pennsylvania State University,  
  University Park, PA, USA\\
}
\thanks{Reimann was partially supported by NSF grants DMS-0801270 and DMS-1201263.}
\email{reimann@math.psu.edu}
\author{Theodore A. Slaman} 
\address{Department of Mathematics  \\
  University of California at Berkeley \\
  Berkeley, CA, USA
}
\thanks{Slaman was partially supported by NSF grants DMS-0501167 and DMS-1001551.}
\email{slaman@math.berkeley.edu}
\date{}
\newcommand{\Bit}{\ensuremath{\{0,1\}}}
\newcommand{\Nat}{\ensuremath{\mathbb{N}}}
\newcommand{\Rat}{\ensuremath{\mathbb{Q}}}
\newcommand{\Real}{\ensuremath{\mathbb{R}}}
\newcommand{\Cant}{\ensuremath{2^{\omega}}}
\newcommand{\Baire}{\ensuremath{\omega^{\omega}}}
\newcommand{\Str}[1][<\omega]{\ensuremath{2^{#1}}}
\newcommand{\Sle}{\ensuremath{\sqsubset}}
\newcommand{\Sleq}{\ensuremath{\sqsubseteq}}
\newcommand{\Sgeq}{\ensuremath{\sqsupseteq}}
\newcommand{\Cl}[1]{\ensuremath{#1}}
\newcommand{\Cyl}[1]{\ensuremath{\llbracket #1 \rrbracket}}
\newcommand{\ACyl}[1]{\ensuremath{\llbracket #1 \rrbracket}}
\newcommand{\Fam}[1]{\ensuremath{\mathcal{#1}}}
\newcommand{\Meas}{\ensuremath{\mathcal{P}(\Cant)}}
\newcommand{\Rest}[1]{\ensuremath{\restriction {#1}}}
\newcommand{\Op}[1]{\ensuremath{\operatorname{#1}}}
\newcommand{\Halt}{\ensuremath{\emptyset'}}
\newcommand{\Conc}{\ensuremath{\mbox{}^\frown }}
\newcommand{\Estr}{\ensuremath{\epsilon}}
\newcommand{\Tup}[1]{\ensuremath{\langle #1 \rangle}}
\newcommand{\Leb}{\ensuremath{\lambda}}
\DeclareMathOperator{\NCR}{NCR}
\DeclareMathOperator{\T}{T}
\DeclareMathOperator{\TT}{tt}
\DeclareMathOperator{\WTT}{wtt}
\DeclareMathOperator{\meas}{meas}
\begin{document}

\begin{abstract}
	We study the randomness properties of reals with respect to arbitrary probability measures on Cantor space. We show that every non-computable real is non-trivially random with respect to some measure. The probability measures constructed in the proof may have atoms. If one rules out the existence of atoms, i.e.\ considers only continuous measures, it turns out that every non-hyperarithmetical real is random for a continuous measure. On the other hand, examples of reals not random for any continuous measure can be found throughout the hyperarithmetical Turing degrees.
\end{abstract}

	\maketitle

%
%
\section{Introduction} \label{sec:intro}

Over the past decade, the study of algorithmic randomness has produced an impressive number of results. The theory of Martin-Löf random reals, with all its ramifications (e.g.\ computable or Schnorr randomness, lowness and triviality) has found deep and significant applications in computability theory, many of which are covered in recent books by \citet{downey-hirschfeldt:ip} and \citet{nies:ta}. Usually, the measure for which randomness is considered in these studies is the uniform $(1/2,1/2)$-measure on Cantor space, which is measure theoretically isomorphic to Lebesgue measure on the unit interval.

However, one may ask what happens if one changes the underlying measure. It is easy to define a generalization of Martin-L\"of tests which allows for a definition of randomness with respect to arbitrary \emph{computable} measures. 
For arbitrary measures, the situation is more complicated. 
\citet{martinloef:1966} studied randomness for arbitrary Bernoulli
measures, \citet{levin:1973, levin:1976, levin:1984} studied arbitrary
measures on $\Cant$, while \citet{gacs:2005} generalized Levin's
approach to a large class of computable metric spaces. Most recently, \citet{hoyrup-rojas:computabilityprobability_2009} showed that Levin's approach can be extended to any computable metric space. It also turned out that much of the theory (i.e.\ existence of a universal test, the connection with descriptive complexity etc.) can be preserved under reasonable assumptions for the underlying space.

\medskip
In this paper, we study the following question: Given a real $x \in \Cant$, with respect to which measures is $x$ random? This can be seen as a dual to the usual investigations in algorithmic randomness: Given a (computable) measure $\mu$ (where $\mu$ usually is the uniform distribution), what are the properties of a $\mu$-random real?

Of course, every real $x$ is trivially random with respect to a measure $\mu$  which assigns some positive mass to it (as a singleton set), i.e.\ $\mu \{x\} > 0$. But one can ask if there is a measure for which this is not the case and $x$ is still random. It turns out that this is possible precisely for the non-computable reals. Furthermore, one could ask whether there exists a measure $\mu$ such that $x$ is $\mu$-random and $\mu$ does not have atoms at all, i.e. $\mu \{y\} = 0$ for all $y$. The answer to this question shows an unexpected correspondence between the randomness properties of a real and its complexity in terms of effective descriptive set theory: If $x$ is not $\Delta^1_1$, then there exists a non-atomic measure with respect to which $x$ is random.

These results motivate a further investigation. What is the exact classification of all reals (inside $\Delta^1_1$) which are not random with respect to any continuous measure? If we look at $n$-randomness ($n \geq 2$), what is the size and structure of the reals that are not $n$-random for some continuous measure? The latter question will be studied in separate paper \citep{reimann-slaman:ip}.

%
%
\section{Transformations and measures on Cantor Space} \label{sec:meas-cant}

In this section we first quickly review the basic notions of Turing functionals and of measures on Cantor space $\Cant$. The space of probability measures on $\Cant$ is compact Polish, and we can devise a suitable effective representation of measures in terms of Cauchy sequences with respect to a certain metric. This enables us to code measures as binary sequences, so we can use them as oracles in Turing machine computations. This way we can extend Martin-L\"of's notion of randomness to arbitrary measures by requiring that a Martin-L\"of test for a measure is uniformly enumerable in a representation of the measure. This will be done in Section \ref{sec:rand-trans}.

%
%
\subsection{The Cantor space as a metric space}

The \emph{Cantor space} $\Cant$ is the set of all infinite binary sequences, also called \emph{reals}. The usual metric on $\Cant$ is defined as follows: Given $x,y \in \Cant$, $x \neq y$, let $x \sqcap y$ be the longest common initial segment of $x$ and $y$ (possibly the empty string $\Estr$). Define
\[
	d(x,y) = \begin{cases}
		2^{-|x \sqcap y|} & \text{if $x \neq y$}, \\
		0 & \text{if $x=y$}.
	\end{cases}
\]
Endowed with this metric, $\Cant$ is a compact Polish space. A countable basis is given by the \emph{cylinder sets}
\[
	\Cyl{\sigma} = \{ x : \: x\Rest{|\sigma|} \: = \: \sigma\},
\]
where $\sigma$ is a finite binary sequence (string), and $|\sigma|$ denotes the length of $\sigma$. We use $\Str$ to denote the set of all finite binary sequences, and we use $\Sleq$ to denote the usual prefix partial ordering between finite strings. This partial ordering extends in a natural way to $\Str \cup \Cant$. Thus, $x \in \Cyl{\sigma}$ if and only if $\sigma \Sle x$. Finally, given $U \subseteq \Str$, we write $\Cyl{U}$ to denote the open set induced by $U$, i.e. $\Cyl{U} = \bigcup_{\sigma \in U} \Cyl{\sigma}$.

\subsection{Turing functionals} 
\label{sub:turing_functionals}

The notion of a Turing functional will be important in this paper, so we give a formal definition and explain how functionals give rise to partial, continuous mappings from $\Cant$ to $\Cant$.

A \emph{Turing functional} $\Phi$ is a computably enumerable set of triples $(m,k,\sigma)$
such that $m$ is a natural number, $k$ is either $0$ or $1$, and $\sigma$ is a
finite binary sequence. Further, for all $m$, for all $k_{1}$ and $k_{2}$, and
for all compatible $\sigma_{1}$ and $\sigma_{2}$, if $(m,k_{1},\sigma_{1}%
)\in\Phi$ and $(m,k_{2},\sigma_{2})\in\Phi$, then $k_{1}=k_{2}$ and
$\sigma_{1}=\sigma_{2}$. 

In the following, we will also assume that Turing functionals $\Phi$ are \emph{use-monotone}, which means the following hold.
\begin{enumerate}
\item  For all $(m_{1},k_{1},\sigma_{1})$ and $(m_{2},k_{2},\sigma_{2})$ in
$\Phi$, if $\sigma_{1}$ is a proper initial segment of $\sigma_{2}$, then
$m_{1}$ is less than $m_{2}$.

\item  For all $m_{1}$ and $m_{2}$, $k_{2}$ and $\sigma_{2}$, if $m_{2}>m_{1}$
and $(m_{2},k_{2},\sigma_{2})\in\Phi$, then there are $k_{1}$ and $\sigma_{1}$
such that $\sigma_{1}\Sleq\sigma_{2}$ and $(m_{1},k_{1},\sigma_{1})\in
\Phi$.
\end{enumerate}

We write $\Phi^\sigma(m)=k$ to indicate that there is a $\tau$ such
that $\tau$ is an initial segment of $\sigma$, possibly equal to $\sigma$, and
$(m,k,\tau)\in\Phi$. In this case, we also write $\Phi^\sigma(m)\downarrow$, as opposed to $\Phi^\sigma(m) \uparrow$, indicating that for all $k$ and all $\tau \Sleq \sigma$, $(m,k,\tau) \not\in \Phi$.

If $x\in\Cant$, we write $\Phi^x(m)=k$ to indicate
that there is an $l$ such that $\Phi^{x\Rest{l}}(m)=k$. This way, for given $x \in \Cant$, $\Phi^x$ defines a partial function from $\omega$ to $\{0,1\}$ (identifying reals with sets of natural numbers). If this function is total, it defines a real $y$, and in this case we write $\Phi(x) = y$ and say that $y$ is Turing reducible to $x$ via $\Phi$, $y \leq_{\T} x$. 

By use-monotonicity, if $\Phi^\sigma(m) \downarrow$, then $\Phi^\sigma(n)\downarrow$ for all $n < m$. If we let $\overline{m}$ be maximal such that $\Phi^\sigma(\overline{m}) \downarrow$, $\Phi^\sigma$ gives rise to a string $\tau$ of length $\overline{m}+1$,
\[
 	\tau = \Phi^\sigma(0) \dots \Phi^\sigma(\overline{m}).
\] 
If $\Phi^\sigma(n)\uparrow$ for all $n$, we put $\tau = \Estr$. We write $\Phi(\sigma) = \tau$. This way a Turing functional induces a function from $\Str$ to $\Str$ that is \emph{monotone}, that is, $\sigma \Sleq \tau$ implies $\Phi(\sigma) \Sleq \Phi(\tau)$. Note that $\Phi(\sigma)$ is not necessarily a computable function, but we can effectively approximate it by prefixes. More precisely, there exists a computable mapping $(\sigma,s) \mapsto \Phi_s(\sigma) \in \Str \cup \Cant$ so that
$\Phi_s(\sigma) \Sleq \Phi_{s+1}(\sigma)$, $\Phi_s(\sigma) \Sleq \Phi_s(\sigma\Conc i)$ ($i \in \{0,1\}$), and $\lim_s \Phi_s(\sigma) = \Phi(\sigma)$. For technical reasons that will become clear in Section \ref{sec:arbi-rand}, we also require $\Phi_s(\sigma)$ to have the following properties.
\begin{enumerate}[(a)]
  \item $|\Phi_s(\sigma)| \leq s$ (the approximation does not grow too quickly in length), and

  \item $|\Phi_{s+1}(\sigma\Conc i)| \leq |\Phi_s(\sigma)| +1$, for any $i \in \{0,1\}$  (one more unit of time and one more bit of information will yield at most one additional bit of output).
\end{enumerate}

If, for a real $x$, $\lim_n |\Phi(x\Rest{n})| = \infty$, then $\Phi(x) = y$, where $y$ is the unique real that extends all $\Phi(x\Rest{n})$. In this way, $\Phi$ also induces a partial, continuous function from $\Cant$ to $\Cant$. We will use the same symbol $\Phi$ for the Turing functional, the monotone function from $\Str$ to $\Str$, and the partial, continuous function from $\Cant$ to $\Cant$. It will be clear from the context which $\Phi$ is meant. 

A Turing functional $\Phi$ has \emph{computably bounded use} if there exists a computable function $g:\Nat \to \Nat$ so that  $(m,k,\sigma) \in \Phi$ implies that $|\sigma| \leq g(m)$. If $\Phi(x) = y$ for such a functional, we say that $y$ is \emph{bounded Turing} or \emph{weak truth-table} reducible to $x$, $y \leq_{\WTT} x$.

Turing functionals can be relativized with respect to a parameter $z$, by requiring that $\Phi$ is c.e.\ in $z$. We call such functionals \emph{Turing $z$-functionals}. This way we can consider relativized Turing reductions. A real $x$ is Turing reducible to a real $y$ relative to a real $z$, written $x \leq_{\T(z)} y$, if there exists a Turing $z$-functional $\Phi$ such that $\Phi(x) = y$.


%
%
\subsection{Probability measures}

A \emph{measure} on $\Cant$ is a \emph{countably additive,
  monotone function} $\mu: \Fam{F} \to [0,1]$, where $\Fam{F}
\subseteq \mathcal{P}(\Cant)$ is a $\sigma$-algebra. If $\mu$ is normalized, i.e.\ if $\mu(\Cant) =
1$, then $\mu$ is called a \emph{probability measure}. A measure $\mu$ is a \emph{Borel measure} if $\Fam{F}$ is
the Borel $\sigma$-algebra on $\Cant$. It is a basic result of measure
theory that a measure with domain $\Fam{F}$ is uniquely determined by the values
it takes on an algebra $\Fam{A} \subseteq \Fam{F}$ that generates
$\Fam{F}$. It is not hard to see that in $\Cant$, the Borel sets are generated by
the algebra of \emph{clopen sets}, i.e.\ finite unions of basic open
cylinders. Normalized, countably additive, monotone set functions on
the algebra of clopen sets are induced by any function $\rho: \Str \to
[0,1]$ satisfying
\begin{equation} \label{equ-probability measure}
  \rho(\Estr) = 1 \text{ and for all $\sigma \in \Str$, } \rho(\sigma) = \rho(\sigma \Conc 0) + \rho(\sigma \Conc 1),
\end{equation}
where $\Estr$ denotes the empty string. If $\rho$ is as in \eqref{equ-probability measure}, then putting $\mu(\Cyl{\sigma}) =
\rho(\sigma)$ induces a monotone, additive function on the clopen
sets, which in turn uniquely extends to a Borel probability measure on
$\Cant$. In the following, we will deal exclusively with Borel measures. Thus, when we speak of
\emph{measures}, we will always mean Borel measures. 
If convenient, we write $\mu A$ for $\mu(A)$ to improve readability.

The \emph{Lebesgue measure} $\Leb$ on $\Cant$ is obtained by
distributing a unit mass uniformly along the paths of $\Cant$, i.e.\
by setting $\Leb(\sigma) = 2^{-|\sigma|}$. A \emph{Dirac measure}, on
the other hand, is defined by putting a unit mass on a single real,
i.e.  for $x \in \Cant$, let
\[
\delta_x\Cyl{\sigma} = \begin{cases}
  1 & \text{if } \sigma \Sle x, \\
  0 & \text{otherwise.}
\end{cases}
\]
If, for a measure $\mu$ and $x \in \Cant$, $\mu\{x\} > 0$, then $x$
is called an \emph{atom} of $\mu$. Obviously, $x$ is an atom of
$\delta_x$. A measure that does not have any atoms is called
\emph{continuous}.

%
%
\subsection{The space of probability measures on Cantor space}

We denote by $\Meas$ the set of all probability measures on $\Cant$. $\Meas$ can be given a topology (the so-called \emph{weak-$*$ topology}) by letting $\mu_n \to \mu$ if $\int f d\mu_n \to \int f d\mu$ for all continuous real-valued functions $f$ on $\Cant$. 

It is known that if $X$ is compact metrizable, then so is the space of all probability measures on $X$ (see for instance \citep{kechris:1995}). Therefore, 
$\Meas$ is compact metrizable. In particular, it is Polish. A compatible metric (see for example \citep{glasner:2003}) is given by
\[
	d_{\meas}(\mu,\nu) = \sum_{n=1}^\infty 2^{-n} d_n(\mu,\nu),
\]
where
\[
	d_n(\mu,\nu) = \frac{1}{2} \sum_{|\sigma| = n} |\mu\Cyl{\sigma} - \nu\Cyl{\sigma}|.
\]

A countable, dense
subset $\mathcal{D} \subseteq \Meas$ is given by the set of measures which assume
positive, dyadic rational values (of the form $m/2^n$ with $m,n \geq 0$) on a finite number of dyadic rationals, i.e. $\mathcal{D}$ is the set
of measures of the form
\begin{equation*}
  \nu_{\Delta, Q} = \sum_{\sigma \in \Delta} Q(\sigma) \delta_{\sigma \Conc 0^\omega},
\end{equation*}
where $\Delta$ is a finite set of finite strings (representing dyadic rational numbers) and $Q: \Delta \to [0,1]$ such that $\sum_{\sigma \in \Delta} Q(\sigma) = 1$ and  for all $\sigma \in \Delta$, $Q(\sigma)$ is a dyadic rational number.  

It is straightforward to show that in case $\mu,\nu$ are restricted to measures in $\mathcal{D}$, the following relations are computable:
\[
	d_{\meas}(\mu,\nu) < q \quad \text{ and } \quad d_{\meas}(\mu,\nu) \leq q \qquad (q \in \Rat \cap[0,1]).
\]
By effectively enumerating all possible combinations $(\Delta,Q)$, we can also effectively enumerate the set $\mathcal{D}$. In the following, we fix such an enumeration $\mathcal{D} = \{\nu_0, \nu_1, \nu_2, \dots \}$. The triple $(\Meas,\mathcal{D},d_{\meas})$ forms a \emph{computable metric space} (see e.g.\ \citep{weihrauch:2000}, or \cite{hoyrup-rojas:computabilityprobability_2009}, \citep{gacs:2005} in this particular context). 

We can represent measures in $\Meas$ through Cauchy sequences of measures in $\mathcal{D}$, which in turn can be encoded as reals using the enumeration of $\mathcal{D}$. Furthermore, the fact that $\Meas$ is a compact, computable metric space can be used to devise a coding scheme that captures the topology of $\Meas$.

\begin{proposition} \label{prop:exist_representation}
 	There exists a Turing functional\footnote{The way we defined Turing functionals in Section \ref{sub:turing_functionals}, they induce partial mappings from $\Cant$ to $\Cant$. Here we obviously assume that $\Gamma$ induces a mapping from $\Cant$ to $\Baire$. The definition given in Section \ref{sub:turing_functionals} is easily adjustable to this case.} $\Gamma$ so that for all $x\in \Cant$ and all $n \in \Nat$, $\Gamma^x(n)$ is defined and 
 	\[
 		d_{\meas}(\nu_{\Gamma^x(n)}, \nu_{\Gamma^x(n+1)}) \leq 2^{-n}.
 	\]
 	Furthermore, $\Gamma$ induces a continuous surjection $\rho: \Cant \to \Meas$ by letting
 	\[
 		\rho(x) = \lim_n \nu_{\Gamma^x(n)},
 	\]
 	where the limit is taken with respect to the weak-$*$ topology. Finally, $\rho$ is such that for any $x \in \Cant$, the set
 	\[
 		\rho^{-1}(\{\rho(x)\})
 	\]
 	is $\Pi^0_1(x)$.
\end{proposition} 
For a proof, see \citep{day-miller:randomness-non-computable_2011}. (\citep{reimann:apal} has a similar development of representations of measures.) If $\mu \in \Meas$, any real $r$ with $\rho(r) = \mu$ is called a \emph{representation} of $\mu$. Note that the representation of a measure is not unique. In fact, a measure may have uncountably many distinct representations.

\begin{proposition} \label{prop:repres-relation}
  Let $r \in \Cant$ be a representation of a measure $\mu \in \Meas$. Then the relations
  \[
     \mu\Cyl{\sigma} < q \quad \text{ and } \quad  \mu\Cyl{\sigma} > q \qquad (\sigma \in \Str, q \in \Rat)
  \] 
  are c.e.\ in $r$.
\end{proposition}

\begin{proof}
  \[
     |\nu_{\gamma(k)}\Cyl{\sigma} - \mu\Cyl{\sigma}| \leq 2^{-k+|\sigma|+2}.
  \]
  This in turn implies 
  \[
     \mu\Cyl{\sigma} \leq \nu_{\gamma(k)}\Cyl{\sigma} + 2^{-k+|\sigma|+2} \leq \mu\Cyl{\sigma} + 2^{-k+|\sigma|+3}.
   \] 
  Hence $\mu\Cyl{\sigma} < q$ if and only if 
  \[
    \exists k \; \nu_{\gamma(k)}\Cyl{\sigma} + 2^{-k+|\sigma|+2} < q.
  \]
  But $\nu_{\gamma(k)}\Cyl{\sigma}$ is a dyadic rational uniformly computable in $r$, and hence $\nu_{\gamma(k)}\Cyl{\sigma} + 2^{-k+|\sigma|+2} < q$ is decidable given $r$ as an oracle. The proof for $\mu\Cyl{\sigma} > q$ is symmetrical.
\end{proof}

Proposition \ref{prop:repres-relation} easily implies the following.

\begin{proposition} \label{prop:computation-measure}
  Let $r \in \Cant$ be a representation of a measure $\mu \in \Meas$. Then $r$ computes a function $g_\mu : \Str \times \Nat
\to \Rat$ such that for all $\sigma \in \Str$, $n \in \Nat$, 
$$
  |g_\mu(\sigma,n) - \mu\Cyl{\sigma}| \leq 2^{-n}.
$$
\end{proposition}

%
%
\section{Randomness and Transformations of Measures} \label{sec:rand-trans}

%
%
\subsection{Random Reals} \label{sub:random-reals}

We define (relative) randomness of reals for arbitrary measures as a straightforward extension of Martin-L\"of's test notion. The basic idea is to require the test to be \emph{enumerable} ($\Sigma^0_1$) in a representation of the measure. 

\begin{definition}
Let $r_\mu$ be a representation of a measure $\mu$, and let $z \in \Cant$.

\begin{enumerate}[(a)]
 	\item An \emph{$(r_\mu,z)$-test} is given by a sequence $(V_n \colon n \in \Nat)$ of uniformly $\Sigma^0_1(r_\mu \oplus z)$-sets $V_n \subseteq \Str$ such that for all $n$, 
  \[
    \sum_{\sigma \in V_n} \mu\Cyl{\sigma} \leq 2^{-n}.
  \]

 	\item A real $x \in \Cant$ \emph{passes} an $(r_\mu,z)$-test $(V_n)$ if $x \not\in \bigcap_n \Cyl{V_n}$. Otherwise we say the test $(V_n)$ \emph{covers} $x$.

 	\item A real $x \in \Cant$ is $(r_\mu,z)$-\emph{random} if it passes all $(r_\mu,z)$-tests.
 
 \end{enumerate} 
\end{definition}

If, in the previous definition, $z = \emptyset$ (where we identify reals with subsets of natural numbers via the characteristic sequence), we simply speak of an $r_\mu$\emph{-test} and of $x$ being \emph{$r_\mu$-random}. 

The previous definition defines randomness with respect to a specific representation. If $x$ is random for one representation, it is not necessarily random for other representations. 
On the other hand, we can ask whether a real exhibits randomness with respect to \emph{some} representation, so the following definition makes sense.
 
\begin{definition} \label{def:mu-rand}
A real $x \in \Cant$ is \emph{$\mu$-random relative to $z \in \Cant$}, or simply \emph{$\mu$-$z$-random}, if there exists a representation $r_\mu$ of $\mu$ so that $x$ is $(r_\mu,z)$-random. 
\end{definition}

One might argue that this definition of randomness is subject to a certain arbitrariness, as it depends on a particular representation of a measure. 
However, it has recently been shown by \citet{day-miller:randomness-non-computable_2011} that the definition of randomness given in Definition \ref{def:mu-rand} is equivalent to the representation-independent approach via \emph{uniform tests} due to \citet{levin:1973, levin:1976, levin:1984} and \citet{gacs:2005}.

Moreover, in this paper we are interested in results of the type ``For which reals does there exist a measure for which $x$ looks non-trivially random?'' -- i.e.\ can $x$ look random at all? If there exists such a (representation of a) measure, there is good reason to say that $x$ has \emph{some} random content. It is not the aim of this paper to provide a most general solution to the problem of defining randomness for arbitrary measures. The goal is to exhibit an interesting connection between the randomness properties of reals (with respect to \emph{some} representation) and its logical complexity.

\medskip
Of course, every real $x$ is trivially $\mu$-random if it is a $\mu$-atom. The question is under what circumstances $x$ is \emph{non-trivially} $\mu$-random, i.e.\ when does there exist a measure $\mu$ so that $x$ is $\mu$-random and $\mu\{x\} = 0$. 

\medskip
A most useful property of the theory of Martin-Löf randomness is the existence of \emph{universal tests}. Universal tests subsume all other tests. Furthermore, they can be defined uniformly with respect to any parameter. More precisely, for any representation $r_\mu$ of a measure $\mu$, there exists a uniformly c.e.\ in $r_\mu$ sequence $(U_n \colon n \in \Nat)$ of sets $U_n \subseteq \Str$ such that, if we set for $z \in \Cant$,
\[
	U^{z}_n = \{ \sigma \colon \Tup{\sigma, \tau} \in U_n, \: \tau \Sle z \},
\]
then $(U^{z}_n)$ is an $(r_\mu,z)$-test and $x \in \Cant$ is $(r_\mu,z)$-random if and only if $x$ passes $(U^{z}_n)$. We call $(U_n)$ a \emph{universal oracle test} for $r_\mu$. For details on the existence of universal tests, see \citep{Day:2011a}. One can also construct universal tests that have a larger number of parameters. In Section \ref{sec:arbi-rand} we will need a test that is universal for two parameters. Such a test is a uniformly c.e.\ in $r_\mu$ sequence $(U^{(2)}_n \colon n \in \Nat)$ so that, if we set for $z_0, z_1 \in \Cant$,
\[
  U^{z_0,z_1}_n = \{ \sigma \colon \Tup{\sigma, \tau_0, \tau_1} \in U^{(2)}_n, \; \tau_0 \Sle z_0, \tau_1 \Sle z_1 \},
\]
then $(U^{z_0,z_1}_n)$ is an $(r_\mu,z_0\oplus z_1)$-test and $x \in \Cant$ is $(r_\mu,z_0\oplus z_1)$-random if and only if $x$ passes $(U^{z_0,z_1}_n)$.
 
\bigskip

\subsection{Computable measures} 
\label{sub:computable_measures}

A measure is \emph{computable} if there exists a computable function $g: \Str \times \Nat
\to \Rat$ such that for all $\sigma \in \Str$, $n \in \Nat$, 
$$
  |g(\sigma,n) - \mu\Cyl{\sigma}| \leq 2^{-n}.
$$
By Proposition \ref{prop:computation-measure}, any measure with a computable representation is computable. The converse holds, too (see \citep{hoyrup-rojas:computabilityprobability_2009}). For a computable measure $\mu$, not being $\mu$-$z$-random is equivalent to the existence of a sequence $(V_n)$ uniformly c.e.\ in $z$ so that for all $n$, $\mu\Cyl{V_n} \leq 2^{-n}$ and so that $(V_n)$ covers $x$. We call the latter a $(\mu,z)$-test. 

Lebesgue measure $\Leb$ is computable, and it is arguably the most prominent measure on $\Cant$. $\Leb$-random reals are the most studied ones by far, and we will, in consistency with the literature, use the name \emph{Martin-L\"of random reals} for them.

Reals that are random with respect to some computable probability
measure have been called \emph{proper} \citep{zvonkin-levin:1970} or
\emph{natural} \citep{muchnik-semenov-uspensky:1998}. 

Obviously no computable real can be non-trivially random with respect to any measure. The following observation yields that the trivially random reals with respect to computable measures are precisely the computable reals. 

%
%
\begin{proposition}[Levin, \citeyear{zvonkin-levin:1970}]\label{entrop:pro_comp-atoms}
  If $\mu$ is a computable measure and $\mu \{x\} > 0$ for some
  $x \in \Cant$, then $x$ is computable.
\end{proposition}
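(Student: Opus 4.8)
The plan is to show that if $\mu$ is computable and $\mu\{x\} = r > 0$, then $x$ is computable by a direct binary-search argument using the computability of the function $\sigma \mapsto \mu(\Cyl{\sigma})$. Since $\{x\} = \bigcap_n \Cyl{x\Rest n}$ and the cylinders are nested, countable additivity gives $\mu(\Cyl{x\Rest n}) \to \mu\{x\} = r$; in particular $\mu(\Cyl{x\Rest n}) \geq r$ for every $n$. On the other hand, for any string $\sigma$ incomparable with $x$ we have $\Cyl{\sigma} \cap \{x\} = \emptyset$, and more to the point, at each level $n$ exactly one string $\tau$ of length $n$ is an initial segment of $x$, and the other $2^n - 1$ strings $\tau'$ of that length satisfy $\mu(\Cyl{\tau'}) \le 1 - \mu(\Cyl{x\Rest n}) \le 1 - r$. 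So if we pick $n_0$ with $2^{-n_0} < r/2$, then $x\Rest n$ is, for every $n$, the \emph{unique} string $\tau$ of length $n$ with $\mu(\Cyl{\tau}) > 1 - r$ — and once $n \geq n_0$ the threshold $r$ is comfortably separated from $1 - r$ by more than $2 \cdot 2^{-n_0}$... wait, I should phrase the separation more carefully: the point is simply that at each level there is a genuine gap between $\mu(\Cyl{x\Rest n}) \geq r$ and $\mu(\Cyl{\tau'}) \le 1-r$ for the competing strings, as long as $r > 1/2$; for general $r$ one instead descends the tree greedily, always following the child of larger measure, and argues the greedy path must equal $x$.

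Here is the cleaner version I would actually write. Relativize nothing; $\mu$ is computable, so by the preceding proposition there is a computable $g(\sigma,n)$ with $|g(\sigma,n) - \mu(\Cyl{\sigma})| \le 2^{-n}$. Fix a rational $\epsilon$ with $0 < \epsilon < r$, where $r = \mu\{x\}$ — note we do not know $r$ exactly, but $x$ being an atom means \emph{some} such $r$ exists; for the algorithm it suffices to run the construction for each rational $\epsilon \in (0,1)$ in parallel and observe that the one(s) with $\epsilon < r$ succeed. Define a path $y \in \Cant$ by recursion: having defined $y\Rest n$, let $y\Rest(n+1)$ be $y\Rest n \Conc i$ for the $i \in \{0,1\}$ with larger value of $\mu(\Cyl{y\Rest n \Conc i})$ (breaking ties, say, in favor of $0$); using $g$ with precision parameter growing in $n$ this comparison is decidable, so $y$ is computable uniformly.

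The key claim is that $y = x$. Indeed, $\mu(\Cyl{x\Rest n}) \geq r$ for all $n$ by the nesting/countable-additivity observation above. Suppose $y \neq x$ and let $n$ be least with $y\Rest n \neq x\Rest n$; then $y\Rest(n-1) = x\Rest(n-1)$ and at that node the greedy rule chose the child \emph{not} on $x$, so $\mu(\Cyl{y\Rest n}) \geq \mu(\Cyl{x\Rest n}) \geq r$, hence \emph{both} children of $y\Rest(n-1)$ have measure $\geq r$, forcing $\mu(\Cyl{y\Rest(n-1)}) \geq 2r$. Iterating this reasoning back up the tree — at every ancestor the on-$x$ child has measure $\ge r$ while the greedy-chosen sibling does too at the branch point — one can push a lower bound of $2r$ back to the root, contradicting $\mu(\Estr) = 1$ once $r > 1/2$. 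For $r \le 1/2$ this exact contradiction is too weak, so instead I argue directly: along $x$ itself, $\mu(\Cyl{x\Rest n}) \ge r$ always, and at the first point of disagreement the greedy rule picked a child of measure at least that of the on-$x$ child, so $\mu(\Cyl{x\Rest n}) \le (1/2)\mu(\Cyl{x\Rest(n-1)})$ at that step; but there is no reason a single such step contradicts anything. The correct fix — and the step I expect to be the main obstacle — is to note that the greedy path $y$ is itself an atom or near-atom: $\mu(\Cyl{y\Rest n})$ is non-increasing and at each level at least half of $\mu(\Cyl{y\Rest(n-1)})$, but that only gives $\ge 2^{-n}$, not a positive limit. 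So instead one should choose, at the node $x\Rest(n-1)$ for each $n$, not the greedy child globally but test the hypothesis "$\mu(\Cyl{\tau}) \ge \epsilon$": for $n$ large enough that $2^{-n}$-many disjoint cylinders cannot all have measure $\ge \epsilon$, the string $x\Rest n$ is the unique extension of $x\Rest(n-1)$ at its level with $\mu \ge \epsilon$ — this uses that the $2^{n} - 1$ cylinders at level $n$ disjoint from $\Cyl{x\Rest n}$ have total measure $\le 1 - r < 1 - \epsilon$, which is fine, but uniqueness of the $\ge \epsilon$ string among siblings at level $n$ requires only that the sibling's cylinder, being disjoint from $\{x\}$, has measure $\le 1 - r$, which does not give $< \epsilon$ either. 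The honest resolution: search, for each $n$, for a string $\tau$ of length $n$ with $\mu(\Cyl{\tau}) > \mu(\Cyl{x\Rest n}) - 2^{-n}$ \emph{approximated from above}, and use that $x\Rest n$ is eventually the lexicographically-least string whose approximate measure exceeds $\epsilon$ and which extends the previously-found string — making this precise, and verifying the approximations with $g$ pin down a single string at each level, is the real content, and is where I would spend the care.
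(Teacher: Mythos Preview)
Your proposal correctly identifies that the naive greedy descent (always follow the heavier child) does not work, and your attempts to repair it by seeking a \emph{unique} string of large measure at each level are also correctly diagnosed as failing: there may be several atoms of mass $>\epsilon$, so uniqueness is simply false in general. But you never find a working replacement, and the ``honest resolution'' at the end is not a proof --- it presupposes you already know $\mu(\Cyl{x\Rest n})$ and still relies on an eventual uniqueness that need not hold.

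The missing idea, which is exactly what the paper uses, is to drop the demand for uniqueness and instead observe that the set of high-measure strings has \emph{bounded width}. Fix a rational $c$ with $0 < c < \mu\{x\}$ and let
\[
T \;=\; \bigl\{\,\sigma \in \Str : g(\sigma,|\sigma|) \geq c - 2^{-|\sigma|}\,\bigr\}.
\]
Then $T$ is a recursive tree containing $x$, and since the cylinders at a given level are disjoint and have total measure $1$, the number of strings in $T$ of length $m$ is at most $1/(c - 2\cdot 2^{-m})$, which is eventually bounded by a constant independent of $m$. A recursive tree of bounded width has only finitely many infinite paths, each of which is therefore isolated; and an isolated path through a recursive tree is recursive. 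Hence $x$ is recursive. This sidesteps entirely the problem you were wrestling with: you do not need to single out $x$ among the high-measure strings at each level, only to know that there are boundedly many of them.
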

 
\begin{proof}
  Suppose $\mu\{x\} > 2^{-m} > 0$ for some computable $\mu$ and $m \geq 1$.
  Let $g$ be a computation function for $\mu$, i.e.\ $g$ is computable and for all $\sigma$ and $n$, $|g(\sigma,n) - \mu\Cyl{\sigma}| \leq 2^{-n}$. Define a computable tree $T \subseteq \Str$ by letting $\sigma \in T$ if and only if
  $g(\sigma,|\sigma|) \geq 2^{-m} - 2^{-|\sigma|}$. $x$ is an infinite path through $T$. We claim that $x$ is a \emph{fully isolated} path, i.e.\ there exists a string $\sigma$ such that for all $\tau \Sgeq \sigma$, $\tau \in T$ implies $\tau \Sle x$. Clearly any fully isolated path through $T$ is computable. 

  Suppose $x$ were not fully isolated. Then there exist infinitely many $\sigma_n \Sle x$, $\sigma_n \Sle \sigma_{n+1}$, such that $\sigma_n^{\vee} \in T$, where $\sigma_n^{\vee}$ is obtained from $\sigma_n$ by switching the last bit. It follows that the $\sigma_n^{\vee}$ are pairwise incompatible. Since $\sigma_n^{\vee}\in T$, we have that for sufficiently large $n$, 
  \[
  	\mu\Cyl{\sigma_n^{\vee}} \geq \frac{1}{2^{m+1}},
  \]	
  which is impossible since the $\sigma_n^{\vee}$ are pairwise incompatible and $\mu$ is a probability measure.
\end{proof}

As regards non-computable reals, Levin proved that, from a computability
theoretic point of view, randomness with respect to a computable
probability measure is computationally as powerful as Martin-L\"of
randomness. This was
independently shown by \citet{kautz:1991}.

%
%
\begin{theorem}[Levin, \citeyear{zvonkin-levin:1970}; Kautz
  \citeyear{kautz:1991}] \label{rand:thm_propalgequiv} 
  A non-computable real which is random with respect to some computable measure is Turing  equivalent to a Martin-L\"of random real.
\end{theorem}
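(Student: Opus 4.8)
The plan is to transfer the computable measure $\mu$ to Lebesgue measure by a $\mu$-computable version of the \emph{probability integral transform}, and to read off from $x$ a \ML~random real Turing equivalent to it. Fix a computable $\mu$ and a non-recursive $\mu$-random $x$. First I would note, using Proposition \ref{entrop:pro_comp-atoms}, that $x$ is not a $\mu$-atom (atoms of a computable measure are recursive), so $\mu(x\Rest{n}) \to 0$. Next I would subdivide $[0,1]$ recursively in $\mu$: put $I_\Estr = [0,1]$, and split each $I_\sigma$ into adjacent closed subintervals $I_{\sigma\Conc 0}$ on the left and $I_{\sigma\Conc 1}$ on the right, of lengths $\mu(\sigma\Conc 0)$ and $\mu(\sigma\Conc 1)$. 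The endpoints of each $I_\sigma$ are finite sums of values $\mu(\tau)$, hence uniformly computable from $\mu$. Since $\mu(x\Rest{n}) \to 0$, the nested intervals $I_{x\Rest{n}}$ shrink to a single point $F(x)$, and I would let $z \in \Cant$ be its binary expansion. The theorem follows once I show that $z$ is \ML~random for $\Leb$ and that $z \equiv_T x$.

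The preliminary step is to check that a $\mu$-random $x$ lands in the regular part of this picture. I would verify that, using computability of $\mu$, the following are effectively $\mu$-null: (i) the open set $\bigcup\{\Cyl{\sigma} : \mu(\sigma) = 0\}$, which at level $k$ is covered by a computably enumerable antichain of strings $\sigma$ each satisfying $\mu(\sigma) < 2^{-|\sigma|-k}$, of total $\mu$-measure below $2^{-k}$ by Kraft's inequality; and (ii) for each dyadic rational and each endpoint $v$ of some $I_\sigma$ -- there being only countably many such values, uniformly computable from $\mu$ -- the set $\{y : F(y) = v\}$, which by monotonicity of $F$ is either a single point, then computable from $\mu$ and hence effectively $\mu$-null, or a lexicographic interval whose interior, apart from its endpoints, is covered by the cylinders from (i). Since these facts are uniform, their countable combination is again effectively $\mu$-null, so $F(x)$ is non-dyadic and, for every $n$, interior to a unique $I_\sigma$ with $|\sigma| = n$.

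Granting this, $z \equiv_T x$ is immediate: from $x$ one computes the intervals $I_{x\Rest{n}}$, whose diameters $\mu(x\Rest{n})$ tend effectively to $0$, hence one computes the real $F(x)$ and its (non-dyadic) expansion $z$, so $z \leq_T x$; and from $z$ one recovers $F(x)$ as a real and computes $x$ level by level, since given $x\Rest{n}$ the point $F(x)$ lies in the interior of $I_{x\Rest{n}}$ and differs from the point at which it splits into $I_{(x\Rest{n})\Conc 0}$ and $I_{(x\Rest{n})\Conc 1}$, so one decides which half to descend into, giving $x \leq_T z$. For \ML~randomness of $z$ I would argue by contradiction: if a \ML~test $(V_n)$ for $\Leb$ covers $z$, then, passing to $[0,1]$ by binary expansion (a measure isomorphism off a recursive $\Leb$-null set), each $\Cyl{V_n}$ becomes a computably enumerable union $G_n$ of dyadic intervals with $\Leb(G_n) \leq 2^{-n}$ and $F(x) \in G_n$. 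Set $U_n = \{\sigma : I_\sigma \subseteq J \text{ for some dyadic interval } J \text{ enumerated into } G_n\}$; this is enumerable in $\mu$, hence computably enumerable. Because intervals $I_\sigma$ for $\Sleq$-incomparable $\sigma$ meet in at most an endpoint, $\mu(\Cyl{U_n})$ equals the sum of $|I_\sigma|$ over the $\Sleq$-minimal $\sigma \in U_n$, which is at most $\Leb(G_n) \leq 2^{-n}$, so $(U_n)$ is a $\mu$-\ML~test; and it covers $x$, since $F(x)$ is interior to some dyadic $J \subseteq G_n$ and the shrinking intervals $I_{x\Rest{m}}$ eventually lie inside $J$, whence $x\Rest{m} \in U_n$. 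This contradicts the $\mu$-randomness of $x$, so $z$ is \ML~random.

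I expect the main obstacle to be the preliminary step. Since $\mu$ is only assumed computable it may carry atoms, so $F$ is discontinuous and the pushforward $F_*\mu$ need not equal $\Leb$; the effective-nullity facts are precisely what confines $x$ to the region where the interval construction behaves like a computable measure isomorphism. It also matters that the test transfer in the last step uses only interval containment, not $F_*\mu = \Leb$, so that stray atoms of $\mu$ -- which occupy fixed subintervals disjoint from $F(x)$ -- cause no harm.
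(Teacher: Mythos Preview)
Your proposal is correct and follows precisely the route the paper indicates: the paper does not write out a proof but refers (in the paragraph after Theorem~\ref{rand:thm_hommeas}) to Levin's effectivization of the distribution-function map $f(x)=\mu[0,x]$, and you have supplied exactly that argument. The only spot that could use one more line of justification is your assertion in~(ii) that a singleton fibre $F^{-1}(\{v\})=\{y_0\}$ is computable: the obvious bit-by-bit procedure (compare $v$ with the split point of $I_{y_0\Rest n}$) may stall when $v$ coincides with a split point, and that coincidence is only $\Pi^0_1$. The claim is nevertheless true --- if $v$ equals the split point of $I_{y_0\Rest N}$ and the fibre really is a singleton, then $y_0$ must be $(y_0\Rest N)\Conc 01^\omega$ or $(y_0\Rest N)\Conc 10^\omega$, since otherwise an entire $\mu$-null cylinder on the other side would also map to $v$ --- so $y_0$ is eventually constant, hence computable, and the non-recursive $x$ avoids every such fibre. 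With that point secured, your argument goes through as written.
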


The proof of Theorem \ref{rand:thm_propalgequiv} uses the fact that reductions
induce continuous (partial) mappings from $\Cant$ to $\Cant$. Such
mappings transform measures. 


%
%
\subsection{Transformation of Measures}

Let $\mu$ be a Borel measure on $\Cant$, and let $f: \Cant \to \Cant$
be a $\mu$-measurable function, that is, for all measurable $A \subseteq \Cant$, $f^{-1}(A)$ is $\mu$-measurable, too. Such $f$ induces a new measure $\mu_f$, often referred to as
the \emph{image measure} or \emph{push-forward} of $\mu$, on $\Cant$ by letting
\[
        \mu_f(\Cl{A}) = \mu(f^{-1}(\Cl{A})).
\]

Any non-atomic measure 
can be transformed into Lebesgue measure $\Leb$ this way. Recall that a function $f:\Cant \to \Cant$ is a \emph{Borel automorphism} if it is bijective and for any $A \subseteq \Cant$, $A$ is a Borel set if and only if $f(A)$ is. 

%
%
\begin{theorem}[see \citep{halmos:1950}] \label{rand:thm_hommeas}
	Let $\mu$ be a non-atomic probability measure on $\Cant$.
  	\begin{enumerate}[(1)]
	\item There exists a continuous mapping $f: \Cant \to \Cant$ such that $\mu_f = \Leb$.
	\item There exists a Borel automorphism $h$ of $\Cant$ such that $\mu_h = \Leb$.
	\end{enumerate}
\end{theorem}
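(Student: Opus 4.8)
The plan is to derive both parts from a single explicit construction, the \emph{quantile (cumulative distribution) transform} attached to the lexicographic order $<_{\mathrm{lex}}$ on $\Cant$, and then to upgrade it for part (2). For a string $\sigma$ let $a_\sigma$ be the $\mu$-mass lying lexicographically to the left of $\Cyl{\sigma}$; concretely $a_\Estr = 0$, $a_{\sigma \Conc 0} = a_\sigma$, and $a_{\sigma \Conc 1} = a_\sigma + \mu(\sigma\Conc 0)$. The intervals $I_\sigma = [a_\sigma,\, a_\sigma + \mu(\sigma)]$ are nested along the tree and, at each level, tile $[0,1]$ from left to right. Define $F : \Cant \to [0,1]$ by $F(x) = \lim_n a_{x\Rest n}$. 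I would first record the two structural facts that drive everything: (i) since $\mu$ is nonatomic, $\mu(x\Rest n) \to 0$, so the oscillation of $F$ on $\Cyl{x\Rest n}$ tends to $0$ and $F$ is continuous as a map into $[0,1]$; and (ii) $F$ is monotone for $<_{\mathrm{lex}}$ and $\mu(F^{-1}([0,t))) = t$ for every $t$, i.e.\ $F$ pushes $\mu$ forward to Lebesgue measure $\lambda$ on $[0,1]$.

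The heart of part (1) is to land in $\Cant$ rather than $[0,1]$ while keeping continuity. I would compose $F$ with a binary-expansion map $b:[0,1]\to\Cant$ and set $f = b\circ F$; the only ambiguity in $b$ occurs at dyadic rationals, so that is where the work concentrates. The key leverage is that $F$ is not injective: it identifies each pair of $<_{\mathrm{lex}}$-adjacent reals $\sigma\Conc 0 1^\omega$ and $\sigma\Conc 1 0^\omega$ to one value, since the difference $F(\sigma\Conc 1 0^\omega) - F(\sigma\Conc 0 1^\omega)$ is the $\mu$-mass of a single point, hence $0$. In $\Cant$ each such point is a \emph{one-sided} limit — $\sigma\Conc 1 0^\omega$ is approached only from the $<_{\mathrm{lex}}$-right and $\sigma\Conc 0 1^\omega$ only from the left. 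I would exploit this to assign $f$, at each ambiguity point, the value dictated by the relevant one-sided limit (the expansion ending in $0$'s on the right, in $1$'s on the left), so that the prescribed value matches the approaching values and no jump survives. Since the set of adjusted points is countable and $\mu$ is nonatomic, the adjustment changes no image measure, and $\mu(f^{-1}(\Cyl{\tau})) = \Leb(\tau)$ reduces by (ii) to the tiling identity $\mu(F^{-1}(I_\tau)) = \lambda(I_\tau)$; this yields $\mu_f = \Leb$.

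The step I expect to be the main obstacle is exactly this continuity analysis into $\Cant$: one must control how the jumps of the binary expansion interact with the level-wise tiling of $[0,1]$ by the $I_\sigma$, track which preimages of dyadics are one-sided limit points, and verify that the one-sided-limit prescription produces a function continuous at \emph{every} point of $\Cant$ rather than merely off a null set. This is where the nonatomicity of $\mu$ and the combinatorics of adjacent cylinders have to be deployed with care; the measure computation and the remaining items are bookkeeping.

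For part (2) I would upgrade $f$ to a genuine bijection. By (i)–(ii), $f$ is injective off the countably many collapsed pairs, so it restricts to a Borel measure-isomorphism from a Borel set $A\subseteq\Cant$ of full $\mu$-measure onto its image $B=f(A)$, a Borel set of full $\Leb$-measure. The remainders $\Cant\setminus A$ and $\Cant\setminus B$ are standard Borel spaces; after adjusting $A$ slightly so that the two remainders have the same cardinality, the Borel isomorphism theorem supplies a Borel bijection $g:\Cant\setminus A \to \Cant\setminus B$. Setting $h = f$ on $A$ and $h = g$ on $\Cant\setminus A$ gives a Borel automorphism of $\Cant$, and because both modifications are supported on null sets we still have $\mu_h = \Leb$. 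The only delicate point here is the Cantor--Bernstein-style bookkeeping ensuring $A$ and $B$ can be chosen Borel, conull, and matched by $f$; this is the standard isomorphism argument for atomless standard probability spaces (cf.\ \citep{halmos:1950}).
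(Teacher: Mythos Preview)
Your approach via the distribution function $F(x)=\mu[0^\omega,x]$ is exactly the paper's: the paper gives only a one-line sketch, ``identify Cantor space with the unit interval and let $f$ equal the distribution function of $\mu$,'' citing Halmos for the details. But that sketch really produces a continuous map $\Cant\to[0,1]$; the passage back to $\Cant$ is hidden in the word ``identify.'' You try to make this passage honest by composing with a binary-expansion section $b:[0,1]\to\Cant$, and this is where your argument breaks. You conflate two unrelated countable sets: the $<_{\mathrm{lex}}$-adjacent pairs $\sigma\Conc 0 1^\omega,\ \sigma\Conc 1 0^\omega$ (where $F$ fails to be injective --- these are indeed one-sided limit points of $\Cant$) and the $F$-preimages of dyadic rationals (where $b$ is discontinuous). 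The second set depends entirely on $\mu$ and need not consist of one-sided points. If $x$ is a two-sided limit point with $F(x)$ dyadic, then $F(y)\to F(x)$ from both sides as $y\to x$, and no choice of value for $b(F(x))$ makes $b\circ F$ continuous at $x$.

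In fact no repair is possible: for the $(1/3,2/3)$-Bernoulli measure every clopen subset of $\Cant$ has $\mu$-measure of the form $k/3^n$, so if $f:\Cant\to\Cant$ were continuous then $f^{-1}(\Cyl{0})$ would be clopen and could not have $\mu$-measure $1/2$. Hence part~(1) has to be read, as the paper's sketch implicitly does, with target $[0,1]$ under the usual identification; your $F$ already proves that version. Your argument for part~(2) is the standard one and is fine, since it only needs $F$ to be a Borel measure-isomorphism off a countable set, which it is.
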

                                        
The simple idea to prove (1) is to identify Cantor space with the unit interval $[0,1]$ and define $f(x) = \mu([0,x])$, that is, to let $f$ equal the distribution function of $\mu$.

To prove Theorem \ref{rand:thm_propalgequiv}, Levin showed that this idea works in the effective case, too. Furthermore, one can even deal with the presence of atoms, as long as the measure is computable.
Roughly speaking, a
computable transformation transforms a computable measure into another
computable measure. At the same time, a computable transformation
will preserve randomness in the sense that a real random with
respect to $\mu$, when transformed by a computable mapping,
is random with respect to the image measure of $\mu$. 

In the following, we will use a (relativized) transformation of Lebesgue measure to render a given real random.

%
%
\section{Randomness with Respect to Arbitrary Measures}\label{sec:arbi-rand}

The Levin-Kautz result implies that the set of reals that are non-trivially random with respect to a computable measure are contained in the set of Martin-L\"of random Turing degrees. But what about the reals non-trivially random with respect to an arbitrary measure? In this section we will show that these coincide with the non-computable reals. Every non-computable real is non-trivially random with respect to some measure.

The proof of this result uses two important results from computability theory and algorithmic randomness, the Ku{\v{c}}era-G{\'a}cs Theorem and the Posner-Robinson Theorem.

We will need the Ku{\v{c}}era-G{\'a}cs Theorem in the following, relativized form.
%
%
\begin{theorem}[\citet{kucera:1985, gacs:1986}]\label{arbi:thm_rel-kucera}
  Let  $x, z \in \Cant$. There exists a real $y$ that is Martin-Löf random relative to $z$ such that
  \[
    x \leq_{\WTT(z)} y \leq_{\WTT(z)} x \oplus z'.
  \]
\end{theorem}

%
%
\begin{theorem}[\citet{posner-robinson:1981}]
	\label{entrop:thm_posner-robinson}
If $x \in \Cant$ is non-computable, then there is a $z \in
  \Cant$ such that $x \oplus z \geq_{\T} z'$.
\end{theorem}

\begin{corollary}\label{cor:kg-pr}  
  For every non-computable real $x$, there exist reals $y,z \in \Cant$ so that $y$ is Martin-Löf random relative to $z$ and 
  \[
    x \leq_{\WTT(z)} y \quad \text{ and }  \quad y \leq_{\T(z)} x. 
  \] 
\end{corollary}

The (relative) Turing equivalence to a random real allows for
transforming Lebesgue measure $\Leb$ in a sufficiently controlled
manner. More precisely, we can obtain a $\Pi^0_1$ class $M$ of
representations of measures. Each measure represented in this class is a good candidate for a
measure that renders $x$ random. We will use a compactness
argument to show that at least one member $r_\mu$ of $M$ has the
property that the Martin-L\"of random real $y$ is still
$\Leb$-random \emph{relative to} $r_\mu$ (here $r_\mu$ is viewed as a real, not a measure). Then, $x$ has to be $r_\mu$-random, since otherwise an
$r_\mu$-test could be effectively transformed into a
Martin-L\"of $\Leb$-test relative to $r_\mu$ which $y$ would fail.

\bigskip
We now state and prove the main result of this section.

%
%
\begin{theorem}\label{thm:nonrec-impl-rand}
  For any real $x \in \Cant$, the following are
  equivalent: 
  \begin{enumerate}[(i)]
  \item There exists a probability measure $\mu$ such that $x$ is
    not a $\mu$-atom and $x$ is $\mu$-random.
  \item $x$ is not computable.
  \end{enumerate}
\end{theorem}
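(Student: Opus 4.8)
The plan is to establish the two implications separately, the substance lying entirely in (ii)$\Rightarrow$(i). For (i)$\Rightarrow$(ii) I would argue by contraposition: suppose $x$ is recursive and $\mu$ is any measure with $\mu\{x\}=0$. Then the sequence $\mu(\Cyl{x\Rest{n}})$ is nonincreasing with limit $\mu(\{x\})=0$, so --- using that a representation $p_\mu$ computes rational approximations to every value $\mu(\Cyl{\sigma})$ to within any prescribed error --- one can, recursively in $p_\mu$, find for each $n$ a length $k_n$ with $\mu(\Cyl{x\Rest{k_n}})<2^{-n}$. Since $x$ is recursive, $n\mapsto x\Rest{k_n}$ is recursive in $p_\mu$, so $U_n=\{x\Rest{k_n}\}$ defines a $p_\mu$-\ML~test, and it covers $x$; hence $x$ is not $\mu$-random. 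In particular no recursive real is non-trivially random for any measure.

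For (ii)$\Rightarrow$(i), assume $x$ is not recursive; the idea is to push Lebesgue measure forward along a Ku\v cera--G\'acs coding of $x$. By the theorem of Ku\v cera and G\'acs in the form that yields a truth-table reduction --- equivalently, a \emph{total} recursive continuous map; this is legitimate because the decoding is total and reads a recursively bounded initial segment of the oracle --- there is an \ML~random (that is, $\Leb$-random) real $z$ and a total recursive continuous $f\colon\Cant\to\Cant$ with $f(z)=x$. Let $\mu:=\Leb_f$ be the image measure, $\mu(\Cl{A})=\Leb(f^{-1}(\Cl{A}))$. Since $f$ is total and continuous, each $f^{-1}(\Cyl{\sigma})$ is clopen; since $f$ is recursive, a finite clopen presentation of it is found uniformly in $\sigma$ (enumerate strings whose cylinder $f$ maps into $\Cyl{\sigma}$ and those whose cylinder $f$ maps into its complement --- by compactness these finitely cover $\Cant$). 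Hence $\mu(\Cyl{\sigma})$ is a recursive real uniformly in $\sigma$, so $\mu$ is a recursive measure; fix a recursive representation $p_\mu\in P$.

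Conservation of randomness --- unrelativized, since $\Leb$, $\mu$ and $f$ are all recursive --- then gives that $x=f(z)$ is $\mu$-random. Concretely, if $(V_n)$ were a $p_\mu$-\ML~test with $x\in\bigcap_n\Cyl{V_n}$, the prefix-free sets $U_n:=\{\tau:\ f(\Cyl{\tau})\subseteq\Cyl{V_n}\}$ would be uniformly recursively enumerable in $p_\mu$ and satisfy $\Leb(\Cyl{U_n})=\Leb(f^{-1}(\Cyl{V_n}))=\mu(\Cyl{V_n})\leq 2^{-n}$, so they form a $\Leb$-\ML~test (relative to the recursive $p_\mu$, hence relative to $\emptyset$); and from $f(z)=x\in\Cyl{V_n}$ we get $z\in f^{-1}(\Cyl{V_n})=\Cyl{U_n}$ for every $n$, contradicting the randomness of $z$. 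Thus no $p_\mu$-\ML~test covers $x$.

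It remains to see that $x$ is not a $\mu$-atom, and I expect this to be the single delicate point. If $\mu\{x\}=\Leb(f^{-1}\{x\})>0$, pick by the Lebesgue density theorem a string $\tau$ with $\Leb(\{w\in\Cyl{\tau}:\ f(w)=x\})>\tfrac12\cdot 2^{-|\tau|}$. For each $n$ the clopen sets $\{w\in\Cyl{\tau}:\ f(w)(n)=b\}$, $b\in\Bit$, have recursive $\Leb$-measures $r_0,r_1$ summing to $2^{-|\tau|}$, with $r_{x(n)}>\tfrac12\cdot 2^{-|\tau|}$; hence $x(n)$ is the unique $b$ with $r_b>\tfrac12\cdot 2^{-|\tau|}$, which is read off recursively, making $x$ recursive --- a contradiction. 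So $\mu\{x\}=0$, and $x$ is non-trivially $\mu$-random ($\mu$ may well have atoms, just not at $x$). The heart of the matter, and the step I would scrutinize, is obtaining Ku\v cera--G\'acs in a sufficiently effective form (a total decoding with recursively bounded use), so that the image measure is recursive and the two preceding paragraphs need no relativization; with only a Turing reduction in hand, $\Leb_f$ need not be recursive, and one would be forced to relativize conservation of randomness to a representation of $\mu$ --- problematic precisely because the coding real $z$ computes $x$ and so cannot be taken random relative to oracles as complex as $x$.
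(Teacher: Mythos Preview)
Your argument for (i)$\Rightarrow$(ii) is fine and matches the paper. The gap is exactly where you suspected it might be: the Ku\v cera--G\'acs decoding is \emph{not} truth-table. It is $\mathrm{wtt}$ (recursively bounded use), but the functional is only guaranteed to converge on the $\Pi^0_1$ class of random reals used in the coding; on inputs that leave that class the decoding diverges. Your parenthetical justification (``the decoding is total and reads a recursively bounded initial segment of the oracle'') conflates bounded use with totality.

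In fact your own argument shows no such total $f$ can exist. If there were a total computable $f$ with every real in $f[\text{ML-randoms}]$, then $\mu=\Leb_f$ would be a single \emph{computable} measure; your conservation step and your density/atom step would then make every non-recursive $x$ non-trivially $\mu$-random, and by the Levin--Kautz theorem (Theorem~\ref{rand:thm_propalgequiv}) every non-recursive real would be Turing equivalent to a Martin-L\"of random real. This is false --- reals of minimal Turing degree, or $1$-generic reals, are non-recursive but not of random degree. So the hoped-for strengthening of Ku\v cera--G\'acs is provably unavailable, and the failure is not a technicality to be patched.

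The paper's proof handles this by first invoking Posner--Robinson to obtain a $z$ with $x\oplus z\geq_{\T} z'$, then applying Ku\v cera--G\'acs \emph{relative to $z$} to get a two-sided Turing equivalence $\Phi(R)=x$, $\Psi(x)=R$ with $R$ random relative to $z$. Having $\Psi$ as well as $\Phi$ lets one bound $\mu(\Cyl{\sigma})$ from above by $\Leb(\Cyl{\Psi(\sigma)})$, which forces non-atomicity at $x$ without appealing to computability of $\mu$. The resulting constraints cut out a $\Pi^0_1(z)$ class of candidate measures, and a basis theorem (Theorem~\ref{entrop:thm_rand-conserv-basis}) selects a representation $p_\mu$ that preserves the randomness of $R$; this is precisely the relativization obstacle you identified in your final sentence, and the basis theorem is the missing idea that resolves it.
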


\begin{proof}
(i) $\Rightarrow$ (ii): If $x$ is computable and $\mu$ is a measure with $\mu\{x\} = 0$, then
we can construct a $\mu$-test that covers $x$ by
using $x$ and any representation of $\mu$ to search for an initial segments of
$x$ whose measure is sufficiently small. More formally, given $n$, compute, using any representation $r_\mu$ of $\mu$ as an oracle, a length $l_n$ for which $\mu\Cyl{x\Rest{l_n}} < 2^{-n}$. Define a $\mu$-test $(V_n)$ by letting $V_n = \{x\Rest{l_n}\}$.
 
(ii) $\Rightarrow$ (i): 
Let $x$ be a non-computable real. Using Corollary \ref{cor:kg-pr}, we obtain 
a real $y$ which is Martin-L\"of random relative to some $z \in \Cant$ and which is
$\T(z)$-equivalent to $x$. 

There are Turing $z$-functionals
$\Phi$ and $\Psi$ such that 
\begin{displaymath}
  \Phi(y) = x \quad \text{ and } \quad \Psi(x) = y.
\end{displaymath}

We will use the functionals $\Phi$ and $\Psi$ to define a set of measures $M$. 
If $\Phi$ were total and invertible, there
would be no problem to define the desired measure, as one could simply
`push forward' Lebesgue measure using $\Phi$. In our case we have to
use $\Phi$ and $\Psi$ to control the measure. We are guaranteed that
this will work \emph{locally}, since $\Phi$ and $\Psi$ are mutual inverses when restricted to
$x$ and $y$. Therefore, given a string $\sigma$ (a possible
initial segment of $x$) we will single out strings which appear
to be candidates for initial segments of an inverse real.

For any $\sigma$, let $\Op{Pre}^*(\sigma) \subseteq \Str$ be defined as
\begin{equation*}
 \Op{Pre}^*(\sigma) = \{ \tau \in \Str: \: \Phi(\tau) \Sgeq \sigma \: \And \:
\Psi_{|\sigma|}(\sigma) \Sleq \tau \}.
\end{equation*}
We will need only the elements of $\Op{Pre}^*$ that are minimal with respect to the prefix relation. Let
\begin{equation*}
 \Op{Pre}(\sigma) = \{ \tau \in \Op{Pre}^*(\sigma)\colon \forall \tau' \in \Op{Pre}^*(\sigma) \: (\tau, \tau' \text{ compatible }  \to \tau \Sleq \tau') \}
\end{equation*}
Note that $\Op{Pre}(\sigma)$ is uniformly c.e.\ in $z$, since we can approximate $\Phi(\tau)$ by longer and longer prefixes (the strings $\Phi_s(\tau)$), and we assume the reductions $\Phi, \Psi$ to be use monotone.

\medskip
To define a measure $\mu$ with respect to which $x$ is non-trivially
random, we satisfy two requirements:
\begin{enumerate}[(1)]
\item The measure $\mu$ \emph{dominates} the partial push-forward of Lebesgue measure induced by
   $\Phi$. This will help ensure that any Martin-L\"of random real is
  mapped by $\Phi$ to a $\mu$-random real.
\item The measure $\mu$ \emph{must not be atomic} on $x$. 
\end{enumerate}
To meet these requirements, we restrict the values of $\mu$ in the
following way:
\begin{equation}\label{entrop:equ_meas-cond}
  \Leb\Cyl{\Op{Pre}(\sigma)} \leq \mu\Cyl{\sigma} \leq \Leb\Cyl{\Psi_{|\sigma|}(\sigma)}. 
\end{equation}
The first inequality ensures that (1) is met, whereas the second
guarantees that $\mu$ is non-atomic on the domain of $\Psi$ (since if $\Psi(z)$ is defined, then $\lim_s |\Psi_s (z\Rest{s})| = \infty$ and thus $\Leb\Cyl{\Psi_s (z\Rest{s}))} \to 0$). If $\Psi(z)$ is undefined, then $\Psi_s (z\Rest{s})$ is constant from some point on and hence imposes a constant positive upper bound on all $\mu\Cyl{z\Rest{s}}$ from that point on.

Let $M \subseteq \Cant$ be the set of all representations of measures that satisfy \eqref{entrop:equ_meas-cond}.
We show that $M$ is non-empty and $\Pi^0_1(z)$.

\begin{claim*}
The set $M$ is not empty.
\end{claim*}

\begin{proof}
We exhibit a measure $\mu$ that respects all upper and lower bounds given by (\ref{entrop:equ_meas-cond}). Since every measure has a representation, this implies that $M$ is non-empty. We construct $\mu$ inductively on the basic open cylinders. Put $\mu\Cyl{\Estr} = 1$. Suppose $\mu\Cyl{\sigma}$ is given such that 
\begin{equation*}
   \Leb\Cyl{\Op{Pre}(\sigma)} \leq \mu\Cyl{\sigma} \leq \Leb\Cyl{\Psi_{|\sigma|}(\sigma)}
\end{equation*}
It follows from the definition of $\Op{Pre}$ that 
\[
	\Op{Pre}(\sigma \Conc 0), \Op{Pre}(\sigma\Conc 1) \subseteq \Op{Pre}(\sigma) \quad \text{ and } \quad \Op{Pre}(\sigma\Conc 0) \cap \Op{Pre}(\sigma\Conc 1) = \emptyset.
\] 
Hence, since $\Leb$ is a measure,
\begin{equation*}
  \Leb\Cyl{\Op{Pre}(\sigma\Conc 0)} + \Leb\Cyl{\Op{Pre}(\sigma\Conc 1)} \leq \Leb\Cyl{\Op{Pre}(\sigma)}. 
\end{equation*}
Furthermore, by the properties of the approximation $\Psi_s$ stated in Section \ref{sub:turing_functionals}, we have 
\begin{equation*}
  \Leb\Cyl{\Psi_{|\sigma\Conc 0|}(\sigma\Conc 0)} + \Leb\Cyl{\Psi_{|\sigma\Conc 1|}(\sigma\Conc 1)} \geq 2^{-|\Psi_{|\sigma|}(\sigma)|-1} + 2^{-|\Psi_{|\sigma|}(\sigma)|-1} = \Leb\Cyl{\Psi_{|\sigma|}(\sigma)}
\end{equation*}
Thus,
\begin{equation*}
   \Leb\Cyl{\Op{Pre}(\sigma\Conc 0)} + \Leb\Cyl{\Op{Pre}(\sigma\Conc 1)} \leq \mu\Cyl{\sigma} \leq \Leb\Cyl{\Psi_{|\sigma\Conc 0|}(\sigma\Conc 0)} + \Leb\Cyl{\Psi_{|\sigma\Conc 1|}(\sigma\Conc 1)}.
\end{equation*}
Since the mapping $\theta: [0,1] \times [0,1] \to \Real$ given by 
\begin{multline*}
  \theta(s,t) =  \Leb\Cyl{\Op{Pre}(\sigma\Conc 0)} + s \bigl ( \Leb\Cyl{\Psi_{|\sigma\Conc 0|}(\sigma\Conc 0)} -  \Leb\Cyl{\Op{Pre}(\sigma\Conc 0)} \bigr ) \; + \; \\ \Leb\Cyl{\Op{Pre}(\sigma\Conc 1)} + t \bigl ( \Leb\Cyl{\Psi_{|\sigma\Conc 1|}(\sigma\Conc 1)} -  \Leb\Cyl{\Op{Pre}(\sigma\Conc 1)} \bigr )
\end{multline*}
is continuous, it follows from the intermediate value theorem that there exist $s_0,t_0$ such that $\theta(s_0,t_0) = \mu\Cyl{\sigma}$. Put 
\begin{align*}
  \mu\Cyl{\sigma\Conc 0} & = \Leb\Cyl{\Op{Pre}(\sigma\Conc 0)} + s_0 \bigl ( \Leb\Cyl{\Psi_{|\sigma\Conc 0|}(\sigma\Conc 0)} -  \Leb\Cyl{\Op{Pre}(\sigma\Conc 0)} \bigr) \\
  \mu\Cyl{\sigma\Conc 1} & = \Leb\Cyl{\Op{Pre}(\sigma\Conc 1)} + t_0 \bigl ( \Leb\Cyl{\Psi_{|\sigma\Conc 1|}(\sigma\Conc 1)} -  \Leb\Cyl{\Op{Pre}(\sigma\Conc 1)} \bigr ) 
\end{align*}
\end{proof}

\begin{claim*}
	The set $M$ is $\Pi^0_1(z)$.
\end{claim*}

\begin{proof}
  A representation $r$ is in $M$ if and only if 
 \[
 	\forall \sigma \: \Leb\Cyl{\Op{Pre}(\sigma)} \leq \rho(r)\Cyl{\sigma} \leq \Leb\Cyl{\Psi_{|\sigma|}(\sigma)}. 
\]
It suffices to show that the relation $\Leb\Cyl{\Op{Pre}(\sigma)} \leq \rho(r)\Cyl{\sigma} \leq \Leb\Cyl{\Psi_{|\sigma|}(\sigma)}$ is uniformly $\Pi^0_1(z)$. 

   The set $\Op{Pre}(\sigma)$ is c.e.\ in $z$ (uniformly in $\sigma$), and thus the measure $\Leb\Cyl{\Op{Pre}(\sigma)}$ is left-enumerable in $z$. There exists a strictly increasing, computable sequence of dyadic rationals $(q_n)$ so that $q_n \to \Leb\Cyl{\Op{Pre}(\sigma)}$. We have that
  \[
     \rho(r)\Cyl{\sigma} < \Leb\Cyl{\Op{Pre}(\sigma)} \quad \Leftrightarrow \quad \exists n \: \rho(r)\Cyl{\sigma} < q_n.
  \]
  By Proposition \ref{prop:repres-relation}, $\rho(r)\Cyl{\sigma} < q_n$ is $\Sigma^0_1(z)$. Hence $\rho(r)\Cyl{\sigma} < \Op{Pre}(\sigma)$ is $\Sigma^0_1(z)$, too. Proposition \ref{prop:repres-relation} also yields that the relation $\rho(r)\Cyl{\sigma} > \Leb\Cyl{\Psi_{|\sigma|}(\sigma)}$ is $\Sigma^0_1(z)$. Hence the relation $\Leb\Cyl{\Op{Pre}(\sigma)} \leq \rho(r)\Cyl{\sigma} \leq \Leb\Cyl{\Psi_{|\sigma|}(\sigma)}$ is $\Pi^0_1(z)$, as desired, and the argument above is uniform in $\sigma$.
\end{proof}

\medskip
Let $r_\mu \in M$. It follows from the definiton of $M$ that $x$ is not an atom of $\mu$. Suppose $(V_n)$ is an $(r_\mu,z)$-test that covers $x$. For each $n$, let
\[
  W_n =  \{ \Op{Pre}(\sigma) \colon \sigma \in V_n \}.
\]
Then $W_n$ is uniformly $\Sigma^0_1(r_\mu\oplus z)$ since $\Op{Pre}(\sigma)$ is uniformly c.e.\ in $z$. Furthermore, 
\[
  \lambda \Cyl{W_n} \leq \sum_{\sigma \in V_n} \lambda\Cyl{\Op{Pre}(\sigma)} \leq \sum_{\sigma \in V_n} \mu\Cyl{\sigma} \leq 2^{-n}.
\]
Hence $(W_n)$ is a $(\lambda,r_\mu\oplus z)$-test. Moreover, since $y \in \Cyl{\Op{Pre}(x\Rest{k})}$ for all $k$ and $(V_n)$ covers $x$, $(W_n)$ covers $y$. 

At this point, we are not yet quite able to derive a contradiction, since we only have that $y$ is Martin-Löf random relative to $z$, and not necessarily relative to $r_\mu \oplus z$. However, we can use the following basis theorem for $\Pi^0_1$-classes to infer the existence of an element $r \in M$ so that $y$ is $(\lambda,r \oplus z)$-random. Lemma \ref{entrop:thm_rand-conserv-basis} was independently obtained by \citet{downey-hirschfeldt-miller-nies:2005}.

%
%
\begin{lemma}\label{entrop:thm_rand-conserv-basis}
  Let $z \in \Cant$, and let $T \subseteq \Str$ be a
  tree computable in $z$ such that $T$ has an infinite path. Then, for every real
  $y$ that is Martin-L\"of random relative to $z$, there is
  an infinite path $v$ through $T$ such that $y$ is Martin-L\"of
  random relative to $z \oplus v$.
\end{lemma} 

\begin{proof}
  We use the universal oracle test $(U^{(2)}_n)$ introduced in Section \ref{sec:rand-trans}. 
  Given $z \in \Cant$ and $\tau \in \Str$, let
  \[
    U^{z,\tau}_n = \{ \sigma \colon \Tup{\sigma, \tau_0, \tau_1} \in U^{(2)}_n, \;  \tau_0 \Sle z,  \tau_1 \Sleq \tau \},
  \]
  where $(U^{(2)}_n)$ is a universal $\lambda$-test for two parameters, as introduced in Section \ref{sub:random-reals}. Note that the sequence $(U^{z,\sigma}_n)$ is uniformly c.e.\ in $z$ and forms a $(\lambda,z)$-test.

  We enumerate a $(\lambda,z)$-test $(V_n)$ as follows: enumerate a string
  $\sigma$ into $V_n$ if $\Cyl{\sigma}$ is contained in
  $\ACyl{U^{z,\tau}_n}$ for all $\tau \in T$ with $|\tau| =
  |\sigma|$ (note that there are only finitely many such $\tau$).
  
  If $y$ is Martin-L\"of random relative to $z$, there has to
  be some $n$ such that $y \not\in \Cyl{V_n}$. 
  Let 
  \[
    T_0 = \{ \tau \in T \colon \Cyl{y\Rest{|\tau|}} \text{ is not contained in } \Cyl{U^{z,\tau}_n} \}.
  \]
  $T_0$ is clearly closed under initial segments, hence it is a subtree of $T$. For every
  $m$, $y\Rest{m}$ is not enumerated in $V_n$, which implies that $T_0$ has nodes of every length, in particular it is infinite. 
  Applying  K\"onig's Lemma yields an infinite path $v$ through $T_0$.

  Now, $y$ is Martin-Löf random relative to $z \oplus v$. For suppose not, then, since $(U^{(2)}_n)$ is a universal oracle test for $\lambda$, $y \in \bigcap_k \Cyl{U^{z,v}_k}$. In particular, $y \in \Cyl{U^{z,v}_n}$. By the use principle, there is an initial segment $\tau \Sle v$ such that $y\in \Cyl{U^{z,\tau}_n}$, contradicting the fact that $\tau \in T_0$.  
\end{proof}

Using Lemma \ref{entrop:thm_rand-conserv-basis}, we obtain a representation $r_\mu \in M$ so that $x$ is $(r_\mu,z)$-random.
To complete the proof of Theorem \ref{thm:nonrec-impl-rand}, note that any $(r_\mu,z)$-random real is also $r_\mu$-random, and hence $\mu$-random.
\end{proof}

How hard is it to find a measure that makes a given real $x$ random? The tree determining $M$ is computable in the parameter $z$, stemming from the Posner-Robinson Theorem. $z$ can be found recursively in $x \oplus \emptyset'$. It takes another jump to find a path through the tree which conserves randomness for some $y$ Turing $z$-equivalent to $x$. Hence, there is a representation $r_\mu \leq_{\T} x''$ of a measure $\mu$ so that $x$ is $r_\mu$-random.

%
%
\section{Randomness with Respect to Continuous
Measures}\label{entrop:sec_rand-with-resp}

In this section we investigate what happens if one replaces the property ``\emph{random with respect to an arbitrary probability measure}'' (which turned out to hold for any non-computable real) with \emph{being random for a continuous probability measure}.

First, we give an explicit construction of a non-computable real which does not have the latter property.

%
%
\subsection{A real not continuously random}

\begin{theorem}\label{thm:exists-ncr}
There exists a non-computable real which is not random with respect to any continuous measure.
\end{theorem}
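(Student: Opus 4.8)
The plan is to reduce the theorem to the existence of a recursive tree $T\subseteq\Str$ whose set of infinite paths $[T]$ is \emph{countable} but contains a non-recursive real $x$. The point is that a countable closed set has measure $0$ under \emph{every} continuous measure, and when the closed set is the body of a recursive tree this vanishing is effective enough --- uniformly in any representation of the measure --- to build a Martin-L\"of test covering all of $[T]$, in particular its non-recursive member $x$.

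Granting such a $T$ and a non-recursive $x\in[T]$, the reduction runs as follows. Let $\mu$ be an arbitrary continuous measure with representation $p_\mu\in P$, and for each $\ell$ put $L_\ell=\{\sigma\in T:|\sigma|=\ell\}$, a finite set of pairwise incomparable strings that is recursive uniformly in $\ell$. Then $\mu(\ACyl{L_\ell})=\sum_{\sigma\in L_\ell}\mu(\Cyl{\sigma})$ (the cylinders being disjoint), the open sets $\ACyl{L_\ell}$ decrease to $[T]$, so $\mu(\ACyl{L_\ell})$ is nonincreasing with limit $\mu([T])$, and $\mu([T])=0$ because $[T]$ is countable and $\mu$ has no atoms. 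Since $L_\ell$ is recursive and each $\mu(\Cyl{\sigma})$ is approximable to arbitrary precision from $p_\mu$, one can search \emph{recursively in $p_\mu$} for a length $\ell_m$ with $\mu(\ACyl{L_{\ell_m}})<2^{-m}$. Taking $U_m=L_{\ell_m}$ gives a sequence uniformly recursive in $p_\mu$ with $\sum_{\sigma\in U_m}\mu(\Cyl{\sigma})<2^{-m}$, i.e.\ a $p_\mu$-Martin-L\"of test; and since $x\in[T]$ we have $x\Rest{\ell_m}\in L_{\ell_m}=U_m$ for all $m$, so this test covers $x$. Thus $x$ is not $\mu$-random; as $\mu$ (and its representation) was an arbitrary continuous measure, $x$ is random with respect to no continuous measure, while being non-recursive by choice. (The same argument shows that every member of a countable $\Pi^0_1$ class fails to be random for any continuous measure.)

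What remains is to construct $T$: a recursive tree with countable body containing a non-recursive path, equivalently a non-recursive member of a countable $\Pi^0_1$ class --- and this is where the real work sits. That such members exist is classical in the theory of $\Pi^0_1$ classes: non-recursive points already occur at low Cantor--Bendixson rank, and from any $\Pi^0_1$ class having a non-recursive point isolated in its first derivative one obtains, by intersecting with a clopen neighbourhood witnessing that isolation, a $\Pi^0_1$ class of finite rank --- hence countable --- still containing it. Alternatively one builds $T$ directly: run a recursive $\Delta^0_2$-approximation $x=\lim_s x^s$ to a non-recursive real as a ``spine'' of $T$ and let abandoned initial pieces of the spine sprout fixed recursive escape paths, arranged so that these escape paths accumulate only to $x$, so that $[T]$ is exactly $\{x\}$ together with those escape paths. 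The delicate point, which I expect to be the main technical step, is doing this so that $T$ stays recursive while $[T]$ remains countable and $x$ remains genuinely non-recursive --- but it is a construction of a well-understood kind, and once $T$ is in hand the test argument of the previous paragraph finishes the proof within the framework of Sections~\ref{sec:meas-cant} and~\ref{sec:rand-trans}.
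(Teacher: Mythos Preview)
Your proposal is correct, but it takes a different route from the paper's own proof of this theorem. The paper constructs an explicit real: the characteristic sequence $S$ of the set of ``settling times'' $s_n$ of $\Halt\Rest{n}$, and for a given continuous $\mu$ builds the $n$th level of a test directly from the \emph{granularity function} $l(\varepsilon)$ (the length at which all cylinders have $\mu$-measure $\le\varepsilon$), using that $S\Rest{n_0}$ can change at most $n_0$ times and that each change inserts a long block of zeros. So the paper's argument is a hands-on finite combinatorial cover, with no appeal to $\Pi^0_1$ classes.

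Your route --- any non-recursive member of a countable $\Pi^0_1$ class works, because the levels of a recursive tree with $\mu$-null body give a $p_\mu$-test --- is exactly the observation the paper records \emph{after} Theorem~\ref{thm:exists-ncr} as the Kjos-Hanssen--Montalb\'an theorem (the paper derives it from Lemma~\ref{lem:codmeaslowbound}, whereas your level-search argument is a clean self-contained alternative). What your approach buys is generality: it immediately yields that \emph{every} member of a countable $\Pi^0_1$ class lies in $\NCR$, hence Corollary~\ref{cont:cor_ncr-hyp}. What the paper's direct proof buys is complete explicitness: it produces a concrete $S$ and a concrete test, with no appeal to the existence of non-recursive ranked points. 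That existence is indeed classical, and your $\Delta^0_2$-spine-with-escape-paths sketch is the right idea (in fact the paper's $S$ is essentially such a point), but as written you defer the one nontrivial ingredient --- actually exhibiting a recursive $T$ with countable $[T]$ and non-recursive path --- to the literature; the paper's proof needs no such deferral.
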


\begin{proof}
Consider the halting problem $\Halt$. Denote by $\Halt_t$ the approximation to $\Halt$ enumerated after $t$ steps. We define a set of markers $\gamma_t(n)$ that capture when the first $n$ bits of $\Halt_t$ have settled. For each $t$, let $\gamma_t(0) = 0$ and define
\begin{equation*}
  \gamma_t(n+1) := \max \{ \min \{s \leq t : \: \Halt_s \Rest{n+1} = \Halt_t \Rest{n+1} \}, \gamma_t(n)+1 \}.
\end{equation*}
Each $\gamma_t(n)$ (as a function of $t$) will be constant from some point on as computations of $\Halt$ settle. We denote this value by $\gamma(n)$. Let $y_t$ the real given by the characteristic sequence of $\{\gamma_t(n) \colon n \geq 0\}$, and let $y$ be the real given by the characteristic sequence of $\{\gamma(n) \colon n \geq 0\}$.
We claim that $y$ is not random with respect to any continuous measure.

Let $\mu \in \Meas$ be continuous, and let $r_\mu$ be any representation of $\mu$. Since $\Cant$ is compact and $\mu$ is continuous, for every rational $\varepsilon > 0$ there exists a number $l(\varepsilon)$ such that 
\begin{equation*}
  \forall \sigma \in \Bit^{l(\varepsilon)} \; \mu \Cyl{\sigma} < \varepsilon.
\end{equation*}
By Proposition \ref{prop:repres-relation}, the relation $\mu\Cyl{\sigma} < \varepsilon$ is $\Sigma^0_1(r_\mu)$. Therefore, $r_\mu$ can compute such a function $l$.
We use this to uniformly enumerate the $n$th level of an $r_\mu$-test $(V_n)_{n \in \Nat}$ that covers $y$.

First, compute $n_0 = l(2^{-n-1})$ and $n_1 = l(2^{-n-1}/n_0)$.  
Enumerate $y_{n_1} \Rest{n_0}$ into $V_n$. Furthermore, for all $k$ such that $\gamma_{n_1}(k) < n_0$, enumerate the string 
\[
  \bigl (y_{n_1} \Rest{\gamma_{n_1}(k)} \bigr)  \Conc 0^{n_1 - \gamma_{n_1}(k)}
\] 
into $V_n$. 

Note that $y_{t}\Rest{n_0}$ can change at most $n_0$ many times. Observe further that, if the approximation to $\Halt$ changes at a position $m$ at time $t$, it will move the marker $\gamma_t(m)$ and with it all the markers $\gamma_t(k)$, $k > m$ to a position $\geq t$. Hence, for the maximum $k \leq n_0$ so that $\gamma_{n_1}(k) = \gamma(k)$, 
\[
  y_{n_1}\Rest{\gamma_{n_1}(k)} \; = \; y\Rest{\gamma_{n_1}(k)} \; = \; y\Rest{\gamma(k)},
\]
and between $\gamma_{n_1}(k)$ and $n_1$, the characteristic sequence of $y$ has only $0$s. Therefore, $y$ is covered by $V_n$.

Finally, note that the measure of $V_n$ is at most $2^{-n}$, since
\begin{equation*}
  \begin{split}
  	\sum_{v \in V_n} \mu \Cyl{v} & = \mu \Cyl{y_{n_1} \Rest{n_0}} + \sum_{\{k\colon \gamma_{n_1}(k) < n_0\}} \mu \Cyl{(y_{n_1} \Rest{\gamma_{n_1}(k)})  \Conc 0^{n_1 - \gamma_{n_1}(k)}} \\
  	& \leq 2^{-n-1} + n_0 \frac{2^{-n-1}}{n_0} = 2^{-n}.
  \end{split}
\end{equation*}
\end{proof}

%
%
\subsection{Classifying the not continuously random reals}

Theorem \ref{thm:exists-ncr} suggests the following question: Is it possible to classify the reals which are not random with respect to a continuous measure? Denote by $\NCR$ the set of all such reals. Can we obtain bounds on the complexity of $\NCR$? 

An observation by \citet{kjoshanssen-montalban:2005} shows that $\NCR$ is cofinal the hyperarithmetical Turing degrees, i.e.\ for any hyperarithmetical $x$ there exists a hyperarithmetical $y \in \NCR$ so that $x \leq_{\T} y$.

It follows directly from the countable additivity of measures that, for every continuous measure $\mu$, any countable subset of $\Cant$ has $\mu$-measure zero. 

For countable $\Pi^0_1$ classes, we can strengthen this to effective $\mu$-measure zero, and hence no countable $\Pi^0_1$ class contains a real in $\NCR$.

%
%
\begin{theorem}[\citep{kjoshanssen-montalban:2005}]
If $A \subseteq \Cant$ is countable and $\Pi^0_1$, then no member of $A$ can be random with respect to a continuous measure.
\end{theorem}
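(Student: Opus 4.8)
The plan is to turn the (non-effective) fact that $\mu(\mathcal A)=0$ for every continuous $\mu$ into a genuine $p_\mu$-\ML~test covering all of $\mathcal A$, uniformly in $p_\mu$. Fix a continuous measure $\mu$ and write $\mathcal A=[T]$ for a recursive tree $T\subseteq\Str$. For each $s$ let $C_s=\Cyl{\{\sigma\in T:\ |\sigma|=s\}}$. These clopen sets are nested, $C_0\supseteq C_1\supseteq\cdots$, and $\bigcap_s C_s=[T]=\mathcal A$. Since $\mathcal A$ is countable and $\mu$ has no atoms, countable additivity gives $\mu(\mathcal A)=0$; hence, by continuity of $\mu$ along the decreasing sequence $(C_s)$, $\mu(C_s)\to 0$.

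Next I would observe that $\mu(C_s)$ is a finite sum $\sum_{|\sigma|=s,\ \sigma\in T}\mu(\Cyl\sigma)$, and that from a representation $p_\mu\in P$ one can compute, uniformly, a rational approximation to each $\mu(\Cyl\sigma)$ --- and therefore to $\mu(C_s)$ --- to within any prescribed error; this is exactly the content of the effective presentation of $\Meas$ by effective $d_{\meas}$-Cauchy sequences drawn from $\mathcal D$. The test is then defined as follows: given $n$, search through $s=0,1,2,\dots$ until a rational approximation $\widetilde\mu(C_s)$ to $\mu(C_s)$ computed to within $2^{-n-2}$ satisfies $\widetilde\mu(C_s)\le 2^{-n}-2^{-n-2}$, and set $U_n=\{\sigma\in T:\ |\sigma|=s\}$ for the first such $s$. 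Because $\mu(C_s)\to 0$ this search terminates, and when it does we have $\sum_{\sigma\in U_n}\mu(\Cyl\sigma)=\mu(C_s)\le 2^{-n}$ while $\mathcal A=[T]\subseteq C_s=\Cyl{U_n}$. The sequence $(U_n)$ is uniformly recursive in $p_\mu$, hence a $p_\mu$-\ML~test, and it covers every member of $\mathcal A$. Since $\mu$ was an arbitrary continuous measure, no member of $\mathcal A$ can be random for a continuous measure.

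I do not expect a real obstacle. The only two points that need care are, first, that $\mu(C_s)$ is only approximable --- not exactly computable --- from $p_\mu$, which is why the search must be driven by a rational certificate rather than by the undecidable event ``$\mu(C_s)<2^{-n}$''; and second, that the input $\mu(C_s)\to 0$ rests on the exact identity $\mu(\mathcal A)=\lim_s\mu(C_s)$ together with $\mu(\mathcal A)=0$, which is the one place where countability of $\mathcal A$ and non-atomicity of $\mu$ are used. Note that no effective form of countability (e.g.\ a computable bound on the Cantor--Bendixson rank of $\mathcal A$) is required: all effectivity in the construction comes from $T$ being recursive and from $\mu$ being available as an oracle.
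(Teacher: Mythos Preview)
Your argument is correct. The paper's own proof is a single sentence: it invokes Lemma~\ref{lem:codmeaslowbound} (Ku\v{c}era's lower-bound lemma), observing that if some $x\in\Cl{A}$ were $\mu$-random then $x\in\Cl{P}^{\mu}_n$ for some $n$, whence $\mu(\Cl{A})\geq g(\Estr,n)>0$, contradicting $\mu(\Cl{A})=0$.

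Your route and the paper's rest on the same underlying observation --- that $\mu([T])=\lim_s\mu(C_s)$ is approximable from above recursively in $p_\mu$ --- but you unpack it into an explicit test construction, whereas the paper delegates to the pre-packaged Lemma~\ref{lem:codmeaslowbound}. Your version is more self-contained (it needs neither a universal test nor the Ku\v{c}era lemma) and makes transparent exactly where continuity and countability enter; the paper's version is terser and highlights that the result is really a corollary of a general structural fact about $\Pi^0_1$ classes meeting the set of $\mu$-randoms. Both are equally valid; neither yields more than the other here.
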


\begin{proof}
	Let $A = [T]$ for some computable tree $T$, and suppose $\mu$ is a continuous measure. Let $r_\mu$ be any representation of $\mu$. Let $T^{=n}$ denote the strings in $T$ of length $n$. It holds that
  \[
    \mu\Cyl{T^{=n}} \to \mu [T] = 0 \qquad (n \to \infty).
  \]
  Using Proposition \ref{prop:repres-relation}, we can browse the tree $T$ level by level till we see that the measure of $\Cyl{T^{=n}}$ falls below $2^{-n}$. When this happens, we enumerate all strings in $T^{=n}$ into the $n$-th level of an $r_\mu$-test.
\end{proof}

\citet{kreisel:1959} showed that every member of a countable $\Pi^0_1$ class is hyperarithmetic, i.e.\ contained in $\Delta^1_1$. Furthermore, he showed that members of countable $\Pi^0_1$ classes (also called \emph{ranked points}) can be found cofinally the hyperarithmetical Turing degrees. Later, \citet{cenzer-etal:1986} showed that ranked points appear at each hyperarithmetical level of the Turing jump, i.e.\ can be found in Turing degrees obtained by iterating the Turing jump along a computable ordinal. 

%
%
\begin{corollary}\label{cont:cor_ncr-hyp}
	For every computable ordinal $\beta$ there exists an $x \in \NCR$ such that $x \equiv_{\T} \emptyset^{(\beta)}$.
\end{corollary}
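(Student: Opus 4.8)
The plan is to obtain the corollary as an immediate combination of the Kjos-Hanssen--Montalb\'an theorem stated above with the classical analysis of \emph{ranked points}. Recall that for a recursive ordinal $\beta$ (fixed along a recursive notation), the iterated jump $\emptyset^{(\beta)}$ is a hyperarithmetical real, and as $\beta$ ranges over the recursive ordinals these jumps realize a cofinal family of Turing degrees inside $\Delta^1_1$. The essential input is the theorem of \citet{cenzer-etal:1986}, building on \citet{kreisel:1959}: ranked points --- i.e.\ members of countable $\Pi^0_1$ classes --- occur at every hyperarithmetical level of the Turing jump. In particular, for each recursive $\beta$ there is a real $x$ and a countable $\Pi^0_1$ class $\Cl{A} \subseteq \Cant$ with $x \in \Cl{A}$ and $x \equiv_{\T} \emptyset^{(\beta)}$.

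First I would invoke the cited result to produce such a pair $(x, \Cl{A})$ for the given $\beta$. Then, since $\Cl{A}$ is countable and $\Pi^0_1$, the preceding theorem of \citet{kjoshanssen-montalban:2005} applies: no member of $\Cl{A}$ is random with respect to any continuous measure. In particular $x$ is not continuously random, so $x \in \NCR$, while $x \equiv_{\T} \emptyset^{(\beta)}$ by choice. This is exactly the assertion of the corollary.

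The proof is thus little more than a concatenation of two cited facts, and there is no substantive obstacle to overcome; the only points requiring care are bookkeeping ones. One must check that the ranked point delivered by the Cenzer et al.\ construction is genuinely an element of a \emph{countable} $\Pi^0_1$ class (so that the hypothesis of the Kjos-Hanssen--Montalb\'an theorem is met) --- this holds by construction, such points being isolated branches of recursive trees with only countably many infinite paths --- and that its Turing degree is exactly that of $\emptyset^{(\beta)}$, which is the content of the degree computation in \citet{cenzer-etal:1986}. The phrase ``hyperarithmetical level of the Turing jump'' is precisely what guarantees that every $\emptyset^{(\beta)}$ for recursive $\beta$ is captured, so no separate existence argument for the relevant level is needed.
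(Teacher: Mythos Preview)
Your proposal is correct and matches the paper's intended argument exactly: the corollary is stated immediately after the Kjos-Hanssen--Montalb\'an theorem and the remark that ranked points occur at every hyperarithmetical jump level (citing \citet{kreisel:1959} and \citet{cenzer-etal:1986}), with no further proof given. You have simply made explicit the concatenation of those two cited facts, which is all that is required.
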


We would like to obtain an upper bound on the complexity of reals in $\NCR$; in particular, we would like to know whether $\NCR$ is countable. We start with the following simple observation.

%
%
\begin{proposition}
The set $\NCR$ of all reals not random with respect to any continuous measure is $\Pi^1_1$.
\end{proposition}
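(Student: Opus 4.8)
The plan is to unwind the definitions and count quantifiers. Recall that $x \in \NCR$ means: for every continuous measure $\mu$, the real $x$ is *not* $\mu$-random, i.e.\ there exists a $p_\mu$-\ML~test covering $x$. So the statement ``$x \in \NCR$'' reads
\[
	\forall \mu \in \Meas \;\bigl[ \mu \text{ is continuous} \;\longrightarrow\; x \text{ is covered by some } p_\mu\text{-\ML~test}\bigr].
\]
Quantifying over $\mu$ is, via the representation $\pi: [T] \to \Meas$ with $T$ recursive (the Proposition in Section~\ref{sec:meas-cant}), a quantifier over reals $p \in [T] \subseteq \Cant$, hence a $\forall^1$ (real) quantifier; this is the only second-order quantifier in sight, and it is universal, so the whole expression will be $\Pi^1_1$ provided the matrix inside the brackets is arithmetic (in fact $\Sigma^0_2$ or so) in $x$ and $p$.

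First I would check that ``$p \in [T]$ represents a continuous measure'' is arithmetic in $p$. Being a path through the recursive tree $T$ is $\Pi^0_1$ in $p$. Continuity of $\mu = \pi(p)$ — i.e.\ $\mu\{y\} = 0$ for all $y$ — is equivalent, since $\mu$ is a probability measure and $\Cant$ is compact, to the statement that $\inf\{\mu(\Cyl{\sigma}) : |\sigma| = n\} \to 0$, equivalently $\forall \varepsilon > 0\,\exists n\,\forall \sigma\in\Bit^n\,(\mu(\Cyl\sigma) < \varepsilon)$; reading $\mu(\Cyl\sigma)$ off $p$ is computable-with-error, so this condition is $\Pi^0_2$ (indeed it is the very fact exploited in the proof of Theorem~\ref{thm:exists-ncr}). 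Next I would observe that ``$x$ is covered by some $p$-\ML~test'' is equivalent to ``$x$ is in the $G_\delta$ set of the *universal* $p$-\ML~test'': by the existence of a universal test (noted just after Definition~\ref{rand:def_gen-rand}), $x$ is non-$\mu$-random iff $x \in \bigcap_n \Cyl{U_n^p}$ where $(U_n^p)$ is the universal $p$-test, uniformly r.e.\ in $p$. Membership $x \in \bigcap_n \Cyl{U_n^p}$ is $\forall n\,\exists s\,\exists \sigma\,(\sigma \Sle x \;\wedge\; \sigma \text{ enters } U_n^p \text{ by stage } s)$, which is $\Pi^0_2$ in $x \oplus p$. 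Combining: the matrix is a conjunction/implication of arithmetic conditions in $x$ and $p$, hence arithmetic, and the one real quantifier out front is universal, giving $\Pi^1_1$.

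The only mild subtlety — and the step I would be most careful about — is the use of the *universal* test to collapse the existential ``$\exists$ a test'' into an arithmetic condition. Without that move, ``$\exists$ a $p$-\ML~test covering $x$'' has an apparent $\exists^1$ (the test sequence $(U_n)$ is a third-order-looking object, though really r.e.-in-$p$, so coded by a real index); an $\exists^1$ under the outer $\forall^1$ would only give $\Pi^1_2$. Invoking the universal test removes this existential entirely, since the universal test is obtained uniformly and recursively in $p$ from a fixed enumeration, so its index is computable from $p$ and no extra quantifier is needed. With that in hand the computation is routine: I would simply write out
\[
	x \in \NCR \;\Longleftrightarrow\; \forall p\in\Cant\,\Bigl[\bigl(p\in[T] \;\wedge\; \pi(p)\text{ continuous}\bigr) \;\longrightarrow\; x \in \textstyle\bigcap_n \Cyl{U_n^{p}}\Bigr],
\]
note that the bracketed matrix is arithmetic in $x \oplus p$, and conclude $\NCR \in \Pi^1_1$.
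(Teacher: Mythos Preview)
Your proof is correct and follows essentially the same approach as the paper: one universal real quantifier over representations of measures, with an arithmetic matrix. The only difference is that the paper handles ``some $\mu$-test covers $x$'' by directly quantifying $\exists e$ over natural-number r.e.\ indices (which is already arithmetic) rather than invoking the universal test---so your worry about a potential $\Pi^1_2$ bound without that move was unfounded, since $p_\mu$-tests are by definition r.e.\ in $p_\mu$ and hence indexed by natural numbers, not reals.
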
  

\begin{proof}
	We have
	\begin{multline*}
		x \in \NCR \: \Leftrightarrow \: (\forall r)[ r \text{ represents a measure $\mu$ and $\mu$ is continuous} \\ \: \rightarrow \: \text{some $r$-test covers $x$}].
	\end{multline*}
   The property `$r$ represents a measure' is obviously arithmetic, and due to compactness of $\Cant$, a measure is $\mu$ is continuous if and only if 
\[
	(\forall n)(\exists l)(\forall \sigma)[|\sigma| = l \: \rightarrow \mu\Cyl{\sigma} \leq 2^{-n}].
\]
Hence `$\mu$ is continuous' is arithmetic in $r$. Furthermore `some test covers $x$' can be expressed as
\[
	(\exists e)(\forall n) \; \biggl [ \: \bigl [ (\forall s) \sum_{\sigma \in W_{\{e\}^r_s(n)}}  \rho(r)\Cyl{\sigma} \leq 2^{-n} \bigr ] \; \wedge \;   (\exists \sigma) \bigl [\sigma \in W_{\{e\}^r_s(n)} \: \wedge \: \sigma \Sle x \bigr ] \: \biggr ],
\]                                            
which is arithmetic in $x$ and $r$.
\end{proof}

Furthermore, it is not hard to see that $\NCR$ does not have a perfect subset.

%
%
\begin{proposition}
$\NCR$ does not have a perfect subset.
\end{proposition}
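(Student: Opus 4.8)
The plan is to proceed by contradiction. Suppose $\Subcl{A}$ is a nonempty perfect subset of $\NCR$. First I would observe that $\Cl{A}$, being closed with no isolated points and also compact, metrizable and zero-dimensional, is homeomorphic to $\Cant$ by Brouwer's characterization of the Cantor set; I fix a homeomorphism $h \colon \Cant \to \Cl{A}$ and let $\mu = \Leb_h$ be the corresponding image measure on $\Cant$, $\mu(\Cl{B}) = \Leb(h^{-1}(\Cl{B}))$. Then $\mu$ is a Borel probability measure concentrated on $\Cl{A}$, because $\mu(\Cl{A}) = \Leb(h^{-1}(\Cl{A})) = \Leb(\Cant) = 1$, and $\mu$ is continuous, because $h^{-1}\{y\}$ is empty or a singleton for every $y$, so $\mu\{y\} = 0$.

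Next I would fix any representation $p_\mu \in P$ with $\pi(p_\mu) = \mu$ and take $(U_n)_{n \in \Nat}$ to be a universal $p_\mu$-\ML~test, which exists as noted earlier. Since $\mu(\Cyl{U_n}) \leq \sum_{\sigma \in U_n} \mu(\Cyl{\sigma}) \leq 2^{-n}$ for every $n$, the set $\bigcap_n \Cyl{U_n}$ --- which by universality is precisely the set of reals that fail to be $\mu$-random --- has $\mu$-measure $0$, so $\mu$-almost every real is $\mu$-random. As $\mu(\Cl{A}) = 1$ as well, some $x \in \Cl{A}$ is $\mu$-random, and hence random with respect to the continuous measure $\mu$ (witnessed by $p_\mu$); thus $x \notin \NCR$, contradicting $x \in \Cl{A} \subseteq \NCR$.

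The argument is short, and its only substantive ingredients are two classical facts: that a nonempty perfect subset of $\Cant$ carries a continuous Borel probability measure --- this is exactly where the absence of isolated points enters, via the embedding of $\Cant$ --- and that the coverage of a universal \ML~test is null, which is immediate from countable subadditivity just as in the classical Lebesgue case. The one point worth stressing, and the only place I expect any hesitation, is that no effectivity of $\mu$ is needed: the measure $\mu$ extracted from $\Cl{A}$ may be arbitrarily complicated, yet it still has \emph{some} representation $p_\mu$, and relative to any such representation almost every real is $\mu$-random. So the construction is oblivious to the descriptive complexity of $\Cl{A}$, which is why it excludes perfect subsets of $\NCR$ altogether.
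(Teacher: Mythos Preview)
Your proof is correct and follows essentially the same route as the paper: build a continuous probability measure supported on the perfect set, then use that the set of $\mu$-random reals has full $\mu$-measure to extract a continuously random element, yielding the contradiction. The only cosmetic difference is that the paper defines $\mu$ directly on a perfect tree $T_X$ representing the set (splitting mass evenly at each branching node), whereas you obtain the same measure by pushing $\Leb$ forward along a Brouwer homeomorphism $h:\Cant\to\Cl{A}$; these constructions are equivalent.
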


\begin{proof}
Assume $\Cl{X} \subseteq \NCR$ is a perfect subset represented by a perfect tree $T$ with $[T] = \Cl{X}$. We devise a measure $\mu$ by setting $\mu\Cyl{\Estr} = 1$, and define inductively
\begin{equation*}
	\mu \Cyl{\sigma\Conc i} = \begin{cases}
		\mu \Cyl{\sigma} & \text{if } \sigma\Conc (1-i) \not\in T, \\
		\tfrac{1}{2}\mu \Cyl{\sigma} & \text{otherwise}.
	\end{cases}
\end{equation*}
i.e.\ we distribute the measure uniformly over the infinite paths through $T$.

Obviously, $\mu$ is continuous, and since $\mu(X) = 1$, $X$ must contain a $\mu$-random real. (The set of $\mu$-random reals is always a set of $\mu$-measure $1$.)
\end{proof}

The \emph{Perfect Subset Property} refers to the principle that every set in a pointclass is either countable or contains a perfect subset. The Perfect Subset Property for $\pmb{\Pi}^1_1$ is not provable in $\mathsf{ZFC}$. G\"odel showed that if $V=L$, then there exists an uncountable $\Pi^1_1$ set without a perfect subset. 
\citet{mansfield:1970} and \citet{solovay:1969} showed that any $\Sigma^1_2$ set without a perfect subset is contained in the constructible universe $L$. 

%
%
\begin{corollary}
	$\NCR$ is contained in G\"odel's constructible universe $L$.
\end{corollary}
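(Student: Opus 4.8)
The plan is to read the corollary off from the two structural facts about $\NCR$ just established, together with the Mansfield--Solovay theorem quoted above. First I would observe that the preceding Proposition shows $\NCR \in \Pi^1_1$, and every $\Pi^1_1$ set is in particular $\Sigma^1_2$; so $\NCR$ lies in the pointclass to which the Mansfield--Solovay absoluteness result applies. Second, the other Proposition shows that $\NCR$ has no nonempty perfect subset: any perfect $\Cl{X} \subseteq \Cant$ carries a continuous measure $\mu$ whose support is $\Cl{X}$, and since the set of $\mu$-random reals has $\mu$-measure one, such an $\Cl{X}$ must contain a continuously random real and hence cannot be a subset of $\NCR$. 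Applying the theorem of \citet{mansfield:1970} and \citet{solovay:1969} --- that a $\Sigma^1_2$ set with no perfect subset is contained in $L$ --- then yields $\NCR \subseteq L$.

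I do not expect any real obstacle here: the content of the corollary is entirely in the two Propositions and in the cited absoluteness theorem, and the corollary is just their conjunction. The only point worth spelling out in a sentence is that ``no perfect subset'', as it appears in the hypothesis of Mansfield--Solovay, is exactly the conclusion of the earlier Proposition (understood as: no nonempty closed $\Cl{X} \subseteq \NCR$ in which every point is a limit of other points of $\Cl{X}$), so the hypothesis is genuinely met rather than met only vacuously.

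Were one to want a self-contained account, the bulk of the work would be reproducing the Mansfield--Solovay argument itself --- a ranking and tree analysis of the $\Sigma^1_2$ set showing that, in the absence of a perfect subtree, every element of the set is constructible, indeed definable over some level $L_\alpha$ with $\alpha$ countable. For the purposes of this paper, however, it is legitimate simply to invoke it, and no further computation is needed.
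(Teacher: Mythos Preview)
Your proposal is correct and matches the paper's approach exactly: the corollary is stated immediately after the two Propositions ($\NCR$ is $\Pi^1_1$, $\NCR$ has no perfect subset) and the citation of the Mansfield--Solovay theorem, with no separate proof given, since it is precisely the conjunction you describe. Your observation that $\Pi^1_1 \subseteq \Sigma^1_2$ is the only step the paper leaves implicit, and you have filled it in correctly.
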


The upper bound $L$ appears indeed very crude, and an analysis of the proof technique of Theorem \ref{thm:nonrec-impl-rand} together with a recent result by \citet{woodin:sub} will yield that $\NCR$ is countable. Even more, Corollary \ref{cont:cor_ncr-hyp} is in certain sense optimal: Every real outside $\Delta^1_1$ is random with respect to some continuous measure.

\medskip
In the proof of Theorem \ref{thm:nonrec-impl-rand}, the decisive property which ensured the non-trivial $\mu$-randomness of $x$ was \eqref{entrop:equ_meas-cond}:
\begin{equation*}
  \Leb\Cyl{\Op{Pre}(\sigma)} \leq \mu\Cyl{\sigma} \leq \Leb\Cyl{\Psi_{|\sigma|}(\sigma)}. 
\end{equation*}
Here, the second inequality guarantees that $x$ is not a $\mu$-atom. Although we know $\mu\Cyl{\sigma}$ will converge to $0$ as we consider longer and longer initial segments $\sigma \Sle x$, this may not to be the case for reals other than $x$, as the reduction $\Psi$ from $x$ to $y$ is a Turing reduction. 

If, however, $\Psi$ is a $\WTT$-reduction, we can modify the construction in the proof of Theorem \ref{thm:nonrec-impl-rand} to obtain a continuous measure with respect to which $x$ is random.

\begin{theorem} \label{thm:wtt-cont-rand}
  Let $x \in \Cant$. Suppose there exist reals $y,z \in \Cant$ so that $y$ is Martin-Löf random relative to $z$ and
  \[
    x \equiv_{\WTT(z)} y,
  \]
  then $x$ is random with respect to a continuous measure.
\end{theorem}

\begin{proof}
  The proof is similar to the proof of Theorem \ref{thm:nonrec-impl-rand}, with one important modification. Suppose $\Phi$ and $\Psi$ are $\WTT$ $z$-functionals such that 
  \begin{displaymath}
    \Phi(y) = x \quad \text{ and } \quad \Psi(x) = y.
  \end{displaymath}
  (In fact, for the proof to work it suffices that $\Phi$ is a Turing $z$-functional.) Let $g: \Nat \to \Nat$ be a computable bound on the use of $\Psi$. Since both $x,y$ are non-computable, we have that $g$ is unbounded. We may assume that $g$ is strictly decreasing. Define a computable function $h$ as
  \[
     h(k) = \max \{m \colon g(m) \leq k \}.
  \]
  If, for some $v \in \Cant$, $\Psi(v)$ is a real, then $\Psi(v \Rest{k})$ is a string of length at least $h(k)$ (for all $k$). Since $g$ is strictly increasing, we have that $h(k+1) \leq h(k) + 1$.

  Define the set $\Op{Pre}^*(\sigma)$ now as
  \[
   \Op{Pre}^*(\sigma) = \{ \tau \in \Str: \: \Phi(\tau) \Sgeq \sigma \: \And \: \exists s \: \bigl ( |\Psi_s(\sigma)| \geq h(|\sigma|) \; \& \; \Psi_s(\sigma) \Sleq \tau \bigr ).
  \]
  As before, let $\Op{Pre}$ be the set of minimal elements of $\Op{Pre}^*$ with respect to the prefix relation. Note that $\Op{Pre}$ is c.e.\ in $z$, as before.

  Condition \eqref{entrop:equ_meas-cond} is replaced by 
  \begin{equation}\label{equ:cont-meas-cond}
    \Leb\Cyl{\Op{Pre}(\sigma)} \leq \mu\Cyl{\sigma} \leq 2^{-h(|\sigma|)}, 
  \end{equation}
  and $M$ is the set of all reals $r$ so that $\rho(r)$ satisfies \eqref{equ:cont-meas-cond} for all $\sigma \in \Str$.

  As $h(k) \to \infty$ as $k \to \infty$, it follows that every measure represented in $M$ is continuous. The condition $h(k+1) \leq h(k) + 1$ ensures that the proof showing $M$ is non-empty still goes through. A similar argument as in the proof of Theorem \ref{thm:nonrec-impl-rand} shows that $M$ is $\Pi^0_1(z)$.

  Finally, since $\Psi(x) = y$, $y \in \Cyl{\Op{Pre}(x\Rest{n})}$ for all $n$. This in turn implies that the final part of the argument, transforming a possible test for $x$ into a test for $y$ (with respect to the accordant measures), goes through as well.
\end{proof}

Woodin showed that outside the hyperarithmetical sets, the Posner-Robinson Theorem holds with truth-table equivalence.

%
%
\begin{theorem}[\citet{woodin:sub}] \label{thm:woddin-pr}
If $x \in \Cant$ is not hyperarithmetic, then there is a $z \in
  \Cant$ such that $x \equiv_{\TT(z)} z'$.
\end{theorem}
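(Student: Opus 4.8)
The plan is to derive Woodin's theorem from a forcing construction that refines the proof of the Posner--Robinson theorem (Theorem~\ref{entrop:thm_posner-robinson}). Recall that the modern proof of Posner--Robinson builds the real $z$ by a forcing with finite conditions together with a use-monotone functional under construction; genericity then produces a functional $\Gamma$ with $\Gamma^{z\oplus x}=z'$, while coding the bits of $x$ into $z$ at positions that $z'$ can locate gives $x\le_{\T}z'$, so that $z\oplus x\equiv_{\T}z'$. Passing from Turing to truth-table equivalence \emph{relative to $z$} amounts to making both of these reductions \emph{total}, and this is precisely where the hypothesis $x\notin\Delta^1_1$ must be used.

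First I would arrange the construction so that the conditions, the functionals, and the requirements are all $\Delta^1_1$-recursive in their parameters --- equivalently, carry it out inside an admissible set (such as $L_{\omega_1^{\mathrm{CK}}}$) whose reals are precisely the hyperarithmetic ones. The two reductions to be produced are a $z$-computable total functional $\Phi$ with $\Phi(x)=z'$ and a $z$-computable total functional $\Psi$ with $\Psi(z')=x$. The set of oracles $w$ on which a naively-built version of $\Phi$ (or of $\Psi$) would diverge is then a $\Delta^1_1$ set --- in fact $\Pi^0_1$ in a hyperarithmetic parameter. Since $x\notin\Delta^1_1$, the real $x$ lies outside every such set, and this is the extra room that lets the construction, whenever it threatens to leave a branch of a functional undefined, close that branch off with a default value while keeping the equalities $\Phi(x)=z'$ and $\Psi(z')=x$ correct on the actual oracles. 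This yields $x\equiv_{\TT(z)}z'$.

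For the coding direction $x\le_{\TT(z)}z'$ one dedicates a sequence of markers in $z$: at the stage where the $n$th bit of $x$ is recorded, the construction fixes a location in $z$ from which $x(n)$ can be read, and $z'$ (which is $\ge_\T z$) recovers every marker position by simulating the construction, so totality is automatic. For the direction $z'\le_{\TT(z)}x$ the construction must, for each index $e$, commit either to making $\{e\}^z(e)$ converge with small use and never injuring it afterwards, or to keeping it divergent --- and which of the two occurs is to be governed by a bit of $x$, so that $\Phi(x)$ computes $z'$ correctly. As in Posner--Robinson, it is the genericity of $z$ that renders these two families of commitments mutually consistent.

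The main obstacle is reconciling totality with genericity. In the standard argument the functional computing $z'$ is genuinely partial on almost every oracle, and a generic filter has no reason to make it total; forcing totality requires that the ``strategies'' capable of keeping the functional partial do not themselves arise in a $\Delta^1_1$ fashion, and extracting this from $x\notin\Delta^1_1$ --- essentially a Gandy-basis-style maneuver carried out inside the construction --- is the technical heart. Verifying that an admissible set small enough to have only hyperarithmetic ``obstructions'' is nonetheless rich enough to support a sufficiently generic construction, and that the non-hyperarithmeticity of $x$ really does allow one to step around every obstruction, is where the work lies. Once Woodin's theorem is in hand it combines with a truth-table strengthening of the relativized Ku\v{c}era result (Theorem~\ref{arbi:thm_rel-kucera}) and with Proposition~\ref{pro-char-cont-rand} to yield that every real outside $\Delta^1_1$ is random with respect to a continuous measure.
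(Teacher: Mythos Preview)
The paper does not prove this theorem. It is quoted as a result of Woodin, cited from a then-submitted manuscript, and used as a black box to derive Theorem~\ref{cont:thm_ncr-in-hyp}. The only hint the paper gives about the method is the remark immediately following Theorem~\ref{cont:thm_ncr-in-hyp} that the result can be obtained ``via a generalization of the Posner--Robinson Theorem via Kumabe--Slaman forcing.'' Your sketch is exactly along these lines, so there is nothing in the paper's own treatment to compare it against beyond that one sentence.

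As for the sketch itself: the overall architecture is right --- Kumabe--Slaman-style conditions, totality of both reductions as the new demand beyond ordinary Posner--Robinson, and the hypothesis $x\notin\Delta^1_1$ entering precisely to defeat the hyperarithmetic obstructions to density --- but the crux is left vague. In the actual argument a condition is a pair $(p,X)$ with $p$ a finite use-monotone set of functional axioms and $X$ a finite set of reals on which further extensions must not change any values already computed. The key density lemma is: given $(p,X)$ and $e$, if no extension of $(p,X)$ decides the $e$th bit of the functional at argument $x$, then $x$ is recoverable (hyperarithmetically) from the protected reals in $X$. Since the reals placed in $X$ along a sufficiently definable generic sequence are themselves hyperarithmetic, this would make $x$ hyperarithmetic, contrary to hypothesis; hence the needed extension exists. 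Your phrases ``close that branch off with a default value'' and ``obstructions $\Pi^0_1$ in a hyperarithmetic parameter'' gesture toward this, but you have not said what the conditions are, what dense set guarantees totality of $\Phi$ on \emph{every} oracle (not just on $x$), or how failure of that density yields a hyperarithmetic definition of $x$. Those are the pieces that carry the weight, and without them the proposal remains a plan rather than a proof.
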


Combining Theorems \ref{arbi:thm_rel-kucera}, \ref{thm:woddin-pr}, and \ref{thm:wtt-cont-rand} now yields the desired upper bound for $\NCR$.

%
%
\begin{theorem} \label{cont:thm_ncr-in-hyp}
If a real $x$ is not hyperarithmetic, then there exists a continuous measure $\mu$ such that $x$ is $\mu$-random.
\end{theorem}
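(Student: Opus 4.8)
The plan is to combine Woodin's truth-table Posner--Robinson theorem with the recursion-theoretic characterization of continuous randomness given in Proposition \ref{pro-char-cont-rand}. The point is that the two ingredients fit together almost mechanically, and the real work has already been done: the hard analytic content is buried in the equivalence (iv) $\Leftrightarrow$ (i) of Proposition \ref{pro-char-cont-rand} and in Woodin's theorem, both of which we are entitled to assume.

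First I would recall the relativized form of Kučera's theorem (Theorem \ref{arbi:thm_rel-kucera}): if $y \geq_{\T(z)} z'$, then $y$ is Turing equivalent relative to $z$ to some real that is Martin-L\"of random relative to $z$. One needs the truth-table strengthening of this: if in fact $y \equiv_{\TT(z)} z'$, then $y$ is \emph{truth-table} equivalent relative to $z$ to a real $R$ that is ML-random relative to $z$. This holds because Kučera's construction produces total (indeed, tt) reductions in both directions -- the map from a random real to $z'$ is given by a fixed recursive procedure on a $\Pi^0_1$ class with a recursive positive measure lower bound, and the reverse map is the Kučera coding, which is also total. (This is exactly the phenomenon exploited in the passage preceding Proposition \ref{pro-char-cont-rand}.) So starting from $z' \equiv_{\TT(z)} z'$ we get $z' \equiv_{\TT(z)} R$ for some $R$ that is ML-random relative to $z$.

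Now assemble the pieces. Let $x$ be non-hyperarithmetic. By Woodin's theorem there is a $z \in \Cant$ with $x \equiv_{\TT(z)} z'$. By the truth-table relativized Kučera result of the previous paragraph, $z' \equiv_{\TT(z)} R$ for some $R$ that is Martin-L\"of random relative to $z$. Composing the tt-reductions, $x \equiv_{\TT(z)} R$ with $R$ ML-random relative to $z$; that is, condition (iv) of Proposition \ref{pro-char-cont-rand} holds for this $x$ and $z$. Applying the implication (iv) $\Rightarrow$ (i) of that proposition (in its relativized form), there is a continuous measure $\mu$, recursive in $z$, such that $x$ is $\mu$-random relative to $z$. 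In particular $x$ is $\mu$-random (a test that is enumerable in $p_\mu$ alone is a fortiori enumerable in $z \oplus p_\mu$, so failing to be covered by the latter implies failing to be covered by the former). Since $x$ is not hyperarithmetic it is certainly non-recursive, so in fact $\mu\{x\} = 0$ and $x$ is non-trivially $\mu$-random, though the statement only asks for $\mu$-randomness.

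The step I expect to require the most care is verifying that Kučera's theorem really does relativize with truth-table reductions on \emph{both} sides, so that the composition $x \equiv_{\TT(z)} z' \equiv_{\TT(z)} R$ goes through and lands us squarely in clause (iv) of Proposition \ref{pro-char-cont-rand}. The forward direction ($R \to z'$) is uniform and total by design; the subtle direction is that the Kučera coding of an arbitrary real into a member of a $\Pi^0_1$ class of positive measure can be arranged to be truth-table, which is where one uses that the relevant measure lower bound $g$ from Lemma \ref{lem:codmeaslowbound} is \emph{recursive} (here, in $z$) rather than merely $z'$-recursive -- this is automatic since Lebesgue measure and the class in question are recursive in $z$. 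Everything else is bookkeeping: tracking the oracle $z$ through the reductions and through the construction of $\mu$ in the proof of (iv) $\Rightarrow$ (i), and observing that $\mu$-randomness relative to $z$ implies plain $\mu$-randomness.
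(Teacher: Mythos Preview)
Your proposal is correct and follows essentially the same route as the paper, which offers no explicit proof beyond the remark that the result ``follows immediately'' from Woodin's theorem together with Proposition~\ref{pro-char-cont-rand}; you have simply spelled out the implicit bridge (a truth-table form of Ku\v{c}era's theorem) that the paper leaves unstated, and you correctly flag it as the step requiring care.

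One small caution on that step: your sketch asserts that Ku\v{c}era's construction yields tt reductions in \emph{both} directions, but the decoding direction $R \to z'$ is not obviously total in the standard presentation---deciding which extensions of $\sigma_n$ survive in the $\Pi^0_1(z)$ class appears itself to require $z'$, which is what you are trying to decode. This does not damage the argument, however: for the continuity of $\mu$ in the construction behind (iv)$\Rightarrow$(i) of Proposition~\ref{pro-char-cont-rand} (equivalently, for the upper bound $\mu(\sigma)\leq\Leb(\Cyl{\Psi(\sigma)})$ in \eqref{entrop:equ_meas-cond} to force continuity) only the reduction $\Psi\colon x\to R$ need be total, while $\Phi\colon R\to x$ may remain a Turing functional. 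Since the \emph{encoding} direction $z'\to R$ of Ku\v{c}era's construction is tt relative to $z$ (the measure lower bound from Lemma~\ref{lem:codmeaslowbound} bounds the search, as you note), composing with Woodin's tt reduction $x\to z'$ gives the required total $\Psi$, and the rest goes through.
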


Theorem \ref{cont:thm_ncr-in-hyp} yields an interesting measure-theoretic characterization of $\Delta^1_1$. The result can also be obtained via a game-theoretic argument using Borel determinacy, along with a generalization of the Posner-Robinson Theorem via Kumabe-Slaman forcing. This is part of a more general argument which shows that for all $n$, the set $\NCR_n$ of reals which are not $n$-random for some continuous measure is countable. Here $n$-random means that a test has access to the $(n-1)$st jump of a representation of the measure. The countability result for $\NCR_n$ has an interesting metamathematical twist. 
This work will be presented in a separate paper \citep{reimann-slaman:ip}.

\section{Acknowledgments} 
\label{sec:acknowledgments}

We thank an anonymous referee for many insightful and helpful comments and suggestions.


\bibliographystyle{abbrvnat}

\end{document}